\numberwithin{equation}{section}
\theoremstyle{plain}
\newtheorem{prop}{Proposition}[section]
\newtheorem{thm}[prop]{Theorem}
\newtheorem{cor}[prop]{Corollary}
\newtheorem{lem}[prop]{Lemma}
\newtheorem{dfn}[prop]{Definition}
\theoremstyle{definition}
\newtheorem{example}[prop]{Example}
\newtheorem{remark}[prop]{Remark}
\newtheorem{remarks}[prop]{Remarks}
\newcommand{\lam}{\lambda}
\newcommand{\ft}{\mathfrak t}
\newcommand{\fs}{\mathfrak s}
\DeclareMathOperator{\rad}{rad} \DeclareMathOperator{\rank}{rank}
 \DeclareMathOperator{\res}{res}
\begin{document}

\title[Radicals of weight one blocks of Ariki-Koike algebras]
{Radicals of weight one blocks of Ariki-Koike algebras}

\thanks {Corresponding Author: Yanbo Li}
\thanks {Li is supported by the Natural Science Foundation of Hebei
Province, China (A2017501003) and NSFC 11871107.}

\author{Yanbo Li}

\address{Li: School of Mathematics and Statistics, Northeastern
University at Qinhuangdao, Qinhuangdao, 066004, P.R. China}

\email{liyanbo707@163.com}

\author{Dashu Xu}

\address{Xu: School of Mathematics and Statistics, Northeastern
University at Qinhuangdao, Qinhuangdao, 066004, P.R. China}

\email{1264942498@qq.com}

\begin{abstract}
Let $K$ be a field and $q\in K$, $q\neq 0, 1$. Let $\mathcal
{H}_n(q, Q)$ be an Ariki-Koike algebra, where the cyclotomic
parameter $Q=(Q_1, Q_2, \cdots, Q_r)\in K^r$ with $r\geq 2$,
$Q_i=q^{a_i}$, $a_i\in \mathbb{Z}$. For a weight one block $B$ of
$\mathcal {H}_n(q, Q)$, we prove in this paper that $\rad B=I$,
where $I$ is the nilpotent ideal constructed for a symmetric
cellular algebra in [Radicals of symmetric cellular algebras,
Colloq. Math. {\bf 133} (2013) 67-83]. We also give some applications of this result.
\end{abstract}

\subjclass[2000]{16G30, 16N20}

\keywords{radical; Ariki-Koike algebra; weight; block.}

\maketitle

\section{Introduction}
Ariki-Koike algebras are cyclotomic Hecke algebras of type $G(r, 1,
n)$. They were introduced by Ariki and Koike in \cite{AK}, and
independently by Brou$\rm\acute{e}$ and Malle in \cite{BM}, which
include usual Hecke algebras of type $A_{n-1}$ and $B_n$ as special cases. The
cyclotomic Hecke algebras are central to the conjectures of
Brou$\rm\acute{e}$, Malle and Michel \cite{B}. However, these algebras are less well-understood
than Hecke algebras of type A.

\smallskip

It is well-known that the Specht modules of Hecke algebra of type A are indexed by partitions.
James introduced the so-called ``weight" for a partition, which turns out to be a block invariant.
Note that the weight of a block is a measure of the complexity of the representation theory of that
block. For example, a block has weight zero if and only if it is simple.
We refer the reader to \cite{F2, R, S, T} for more results about blocks of small weight.
In \cite{F1} Fayers generalized the notion of weight to multi-partitions and studied weight one blocks
 of Ariki-Koike algebras. The result about these blocks is similar to that of
Hecke algebras of type A, except the number of partitions in a block. A natural thought is to
generalize the known results about blocks of a Hecke algebra of type A to that of an Ariki-Koike algebra.
The purpose of this paper is to do something along this way.

In \cite{GL}, Graham and Lehrer introduced cellular algebras
and proved that the Ariki-Koike algebras are cellular.
In \cite{L}, Li studied the radical of a symmetric cellular algebra
by constructing a nilpotent ideal $I$. Along this way, Li \cite{L2}
studied the radical of the group algebra of a symmetric group by using Murphy basis and
proved that for $A=\mathbb{Z}_pS_n$ with $p$ a prime and $S_n$ a
symmetric group, if $n<2p$, then $\rad A=I$. In fact, the condition
$n<2p$ implies that the weight of all blocks of $A$ is less than
two. Let $\mathcal{H}_q(S_n)$ be a Hecke algebra over $S_n$.
Then each block of $\mathcal{H}_q(S_n)$
is a symmetric cellular algebra. A natural question is:
Let $B$ be a weight one block of $\mathcal{H}_q(S_n)$.
Is $\rad B$ equal to $I_B$, where $I_B$ is the
nilpotent ideal of $B$ constructed by certain cellular basis of $B$?
Furthermore,  is the similar result true for a weight one block of a symmetric Ariki-Koike
algebras?  Note that Malle and Mathas \cite{MM} proved that the Ariki-Koike algebras over any
ring containing inverses of the parameters are symmetric.

Our aim becomes possible depending on the work of Hu and Mathas \cite{HM}.
They gave a graded cellular basis (HM basis) for the symmetric Ariki-Koike
algebra $\mathcal {H}_n(q, Q)$. The particularly useful property of
HM basis to our goal is that it is compatible with the block
decomposition and consequently, each block of $\mathcal
{H}_n(q, Q)$ is a graded symmetric cellular algebra. The main result of
this paper is that for a weight one block $B$ of $\mathcal {H}_n(q,
Q)$, we have $\rad B=I_B$, where $I_B$ is the nilpotent ideal constructed
in \cite{L} by using HM basis of $B$.

\smallskip

Note that Martin's conjecture \cite{Ma}
claims that all projective (indecomposable) modules of a weight $w$ block
in $\mathbb{Z}_pS_n$ have a common radical length $2w+1$ if $w<p$.
This conjecture has been verified for $w\leq 3$.  For more details, see \cite{T}
and the references therein. It is helpful to point out that our result on a weight
one block $B$ in an Ariki-Koike algebra has a  direct corollary, that is,
$(\rad B)^3=0$. This implies that all projective
(indecomposable) modules of $B$ have a common radical length which is at most 3 ($=2w+1$).
However, we will see, the radical length of projective modules need not be $3$,
that is, according to our result, Martin's conjecture of cyclotomic version does not hold,
even for weight one blocks.

\medskip

\section{Symmetric cellular algebras and Ariki-Koike algebras}

Let $K$ be a field. Denote by $K^\times$ the nonzero elements of $K$.
Recall that a finite dimensional $K$-algebra $A$ is called symmetric
if there is a non-degenerate associative symmetric bilinear form $f$
on $A$. Define a $K$-linear map $\tau: A\rightarrow K$ by
$\tau(a)=f(a,1)$. We call $\tau$ a symmetrizing trace.

\subsection{Symmetric cellular algebras}
We refer the reader to \cite{GL} for the definitions of cellular
algebras and cell modules. Let $A$ be a cellular $K$-algebra with
cell datum $(\Lambda, M, \ast, C)$. Then $A$ has a cellular basis
$\{C^\lam_{S,T}\mid \lam\in\Lambda, S,T \in M(\lam)\}$. It is easy to check that
$$C_{S,T}^\lam C_{U,V}^\lam \equiv\Phi(T,U)C_{S,V}^\lam\,\,\,\, (\rm mod\,\,\, A(<\lam)),$$
where $\Phi(T,U)\in K$ depends only on $T$ and $U$. Define
$$\Lambda_0=\{\lam\in\Lambda\mid \text{there exist}\,\, S,T\in M(\lam)
\,\, \text{such that}\,\, \Phi(S,T)\neq 0 \}.$$

\smallskip

For a cell module $W(\lam)$, define a bilinear form $\Phi_{\lam}:
W(\lam)\times W(\lam)\rightarrow K$ by
$\Phi_{\lam}(C_{S},C_{T})=\Phi(S,T).$ The radical of the bilinear
form is defined to be
$$\rad\lam:= \{x\in W(\lam)\mid\Phi_{\lam}(x,y)=0\,\,\,\text{for all} \,\,\,y\in W(\lam)\}.$$
Define $L(\lam)$ to be $W(\lam)/\rad\lam$. If $\lam\in\Lambda_0$,
then $\rad\lam=\rad W(\lam)$. Furthermore, fix an order on $M(\lam)$
and define the Gram matrix $G(\lam)$ to be $(\Phi(T,U))_{T, U\in
M(\lam)}$. Then $\dim L(\lam)=\rank G(\lam)$. In particular,
$\{L(\lam)\mid \lam\in\Lambda_0\}$ is a complete set of simple
$A$-modules up to isomorphism. The composition multiplicity of
$L(\mu)$ in $W(\lam)$ will be denoted by $[W(\lam) : L(\mu)]$.

Now let $A$ be a finite dimensional symmetric cellular $K$-algebra
and $\tau$ a symmetrizing trace. Denote the dual basis by
$D=\{D_{S,T}^\lam \mid \lam\in\Lambda,  S,T\in M(\lam)\}$, which
satisfies
$\tau(C_{S,T}^{\lam}D_{U,V}^{\mu})=\delta_{\lam\mu}\delta_{SV}\delta_{TU}.$
Then Li proved in \cite{L} that for arbitrary elements $S,T,U,V\in
M(\lam)$,
$$D_{S,T}^\lam D_{U,V}^\lam \equiv \Psi(T,U)D_{S,V}^\lam\,\,\,\,
(\rm mod\,\,\, A(>\lam)),$$ where $\Psi(T,U)\in K$ depends only on
$T$ and $U$. Consequently, we also have Gram matrices $G'(\lam)$
defined by the dual basis. Furthermore, for any $\lam\in\Lambda$,
define $$k_{\lam}=\sum\limits_{X\in M(\lam)}\Phi(X,V)\Psi(X,V),$$
where $V\in M(\lam)$. Then Li proved in \cite{L} that
$G(\lam)G'(\lam)=k_{\lam}E,$ where $E$ is the identity matrix.
Moreover, for arbitrary $\lam\in \Lambda$ and $S\in M(\lam)$, we
have $(C_{S,S}^{\lam}D_{S,S}^{\lam})^2=k_\lam
C_{S,S}^{\lam}D_{S,S}^{\lam}.$ If $\tau(a)=\tau(a^\ast)$ for all
$a\in A$, where $\ast$ is the anti-automorphism of $A$, then the dual basis is cellular too. The corresponding
cell modules will be denoted by $W_D(\lam)$. We can define a bilinear form $\Psi_{\lam}:
W_D(\lam)\times W_D(\lam)\rightarrow K$ by
$\Psi_{\lam}(D_{S},D_{T})=\Psi(S,T).$

\smallskip

Moreover, Li defined some subsets of $\Lambda$ in \cite{L} as follows.

$\Lambda_{1}=\{\lam\in\Lambda\mid
\rad\lam=0\},$\qquad\qquad\qquad\qquad
$\Lambda_{2}=\Lambda_{0}-\Lambda_{1},$

$\Lambda_{3}=\Lambda-\Lambda_{0},$\qquad\qquad\qquad\qquad\qquad\qquad\,\,\,\,\,\,
$\Lambda_{4}=\{\lam\in\Lambda_{1}\mid k_{\lam}=0\}$.

Then the following lemma is clear and we omit the proof here.
\begin{lem}\label{2.1}
Let $A$ be a finite dimensional symmetric cellular algebra with a
cellular basis $\{C_{S,T}^\lam\mid\lam\in\Lambda, S, T\in M(\lam)
\}$. Let $B$ be a block of $A$, which is also a symmetric cellular
algebra with a basis $\{C_{U, V}^{\mu}\mid \mu\in\Lambda^B, U, V\in
M(\mu)\}$, where $\Lambda^B$ is some subset of $\Lambda$. Then
$\Lambda_i^B=\Lambda_i\bigcap\Lambda^B$ for $i=0, 1, 2, 3, 4$.
\end{lem}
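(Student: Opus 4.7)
The plan is to handle the five cases one at a time by unwinding the definitions, using the crucial fact that a block $B$ of $A$ is simultaneously a subalgebra and a two-sided ideal summand, so $A = B \oplus B'$ with $B'$ a sum of the remaining blocks. Consequently, multiplication inside $B$ is literally multiplication in $A$, and the cellular filtration restricts, namely $B(<\lam) = B \cap A(<\lam)$ and $B(>\lam) = B \cap A(>\lam)$ for $\lam \in \Lambda^B$.

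First I would dispatch $i=0$: for $\lam \in \Lambda^B$, the scalar $\Phi(T,U)$ appearing in $C_{S,T}^\lam C_{U,V}^\lam \equiv \Phi(T,U)C_{S,V}^\lam \pmod{A(<\lam)}$ takes the same value whether read off in $A$ or in $B$, since every element in sight already lies in $B$ and the filtrations agree on $B$. This gives $\Lambda_0^B = \Lambda_0 \cap \Lambda^B$. The case $i=1$ then follows because the cell module $W(\lam)$, its form $\Phi_\lam$, and $\rad\lam$ are determined entirely by the $\Phi$-values. The cases $i=2,3$ are immediate by complementation.

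For $i=4$ I would check that $k_\lam$ is preserved under restriction to the block. The symmetrizing trace $\tau$ of $A$ restricts to a symmetrizing trace $\tau_B$ of $B$, using non-degeneracy on the summand $B$ coming from the block decomposition. Decomposing the $A$-dual basis element $D_{U,V}^\lam$ (with $\lam \in \Lambda^B$) as $D_B + D_{B'}$ with $D_B \in B$ and $D_{B'} \in B'$, the defining orthogonality $\tau(C_{S,T}^\mu D_{U,V}^\lam) = \delta_{\lam\mu}\delta_{SV}\delta_{TU}$ together with $C_{S,T}^\mu D_{B'} = 0$ for $\mu \in \Lambda^B$ forces $D_B$ to be the dual of $C_{V,U}^\lam$ inside $(B, \tau_B)$. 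Running the same decomposition through the congruence defining $\Psi(T,U)$ modulo $A(>\lam)$ shows that $\Psi$ is preserved as well, and hence $k_\lam$ matches, giving $\Lambda_4^B = \Lambda_4 \cap \Lambda^B$. The main (mild) obstacle is exactly this dual-basis identification for $i=4$; once it is established, the remaining cases are purely formal consequences of the compatibility of the block decomposition with the cellular structure.
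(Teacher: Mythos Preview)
Your argument is correct. The paper itself declares this lemma ``clear'' and omits the proof entirely, so there is no argument to compare against; you have simply supplied the details the authors left implicit, and your identification of the $i=4$ case as the only substantive step is accurate. One small sharpening: your decomposition in fact yields $D_{B'}=0$ outright (for $\mu\notin\Lambda^B$ one has both $\tau(C_{S,T}^\mu D_{U,V}^\lam)=0$ and $\tau(C_{S,T}^\mu D_B)=0$, so $\tau(C_{S,T}^\mu D_{B'})=0$ for every $\mu$), hence $D_{U,V}^\lam\in B$ already and the $\Psi$-congruence transfers without any projection step.
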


\subsection{Ariki-Koike algebras}

Let $1\neq q\in K^{\times}$. Define the quantum characteristic of $q$
to be the positive integer $e$ which is minimal such that
$1+q+\cdots+q^{e-1}=0$. If no such $e$ exists, we set $e=0$. Suppose
that $Q=(Q_1, Q_2, \cdots, Q_r)\in K^r$ with $r\geq 2$,
$Q_i=q^{a_i}$, $a_i\in \mathbb{Z}$. The Ariki-Koike algebra
$\mathcal {H}_n(q, Q)$ with parameters $q$ and $Q$ is the unital
associative $K$-algebra with generators $T_0$, $T_1$, $\cdots$,
$T_{n-1}$ subject to the following relations:
\begin{enumerate}
\item[(H1)]\, $(T_0-Q_1)(T_0-Q_2)\cdots(T_0-Q_r)=0$;
\item[(H2)]\, $T_0T_1T_0T_1=T_1T_0T_1T_0$;
\item[(H3)]\, $(T_i+1)(T_i-q)=0\quad\quad\,\,\text{for}\,\, 1\le i \le n-1$;
\item[(H4)]\, $T_iT_{i+1}T_i=T_{i+1}T_iT_{i+1}\quad\,\text{for}\,\,1\le i \le n-2$;
\item[(H5)]\, $T_iT_j=T_jT_i\quad\quad\quad\quad\quad\,\,\,\,\,\text{for}\,\, 0\le i<j-1\le n-2$.
\end{enumerate}

In order to describe the cellular structure of $\mathcal {H}_n(q,
Q)$, let us first recall some combinatorics. Let $n$ be a positive
integer. A partition $\lam$ of $n$ is a non-increasing sequence of
non-negative integers $\lam=(\lam_1,\cdots,\lam_s)$ such that
$\sum_{i=1}^{s}\lam_i=n$ and we write $|\lam|=n$. The diagram of a
partition $\lam$ is the set of nodes $[\lam]=\{(i,j)\mid 1\leq i,
1\leq j\leq\lam_{i}\}$. An $r$-partition of $n$ is an $r$-tuple
$\lam=(\lam^{(1)}, \cdots, \lam^{(r)})$ of partitions such that
$\sum_{i=1}^r|\lam^{(i)}|=n$. The partitions $\lam^{(1)}, \cdots,
\lam^{(r)}$ are the components of $\lam$. Denote the set of
$r$-partitions of $n$ by $\mathscr{P}_{r,n}$. Then for $\lam,
\mu\in \mathscr{P}_{r,n}$, we say $\lam\unrhd \mu$ (or $\mu\unlhd\lam$) if
$$\sum_{t=1}^{s-1}|\lam^{(t)}|+\sum_{i=1}^j\lam_i^{(s)}
\geq\sum_{t=1}^{s-1}|\mu^{(t)}|+\sum_{i=1}^j\mu_i^{(s)}$$ for all
$1\leq s\leq r$ and all $j\geq 1$. Write $\lam\rhd\mu$ (or $\mu\lhd\lam$)  if
$\lam\unrhd\mu$ and $\lam\neq\mu$. The Young diagram of an
$r$-partition $\lam$ is the set of nodes $$[\lam]=\{(i, j, k)|1\leq
i, 1\leq j\leq\lam^{(k)}_i, 1\leq k\leq r\}.$$ A $\lam$-tableau is a bijective
map $\ft: [\lam]\rightarrow \{1, 2, \cdots, n\}$. A $\lam$-tableau $\ft$ is
called standard if the entries increase along each row and down each
column in each component. We often denote the number of standard $\lam$-tableaux by $n_{\lam}$.

The first cellular basis of $\mathcal {H}_n(q, Q)$ was given by
Graham and Lehrer  in \cite{GL} using Kazhdan-Lusztig basis of
$\mathcal {H}(S_n)$, the Hecke algebra of $S_n$. In \cite{DJM},
Dipper, James and Mathas constructed another cellular basis (DJM
basis), which is similar to the basis of $\mathcal {H}(S_n)$
introduced by Murphy \cite{Mr}. Then one has cell (Specht) modules
$W(\lam)$, where $\lam$ are $r$-partitions. Moreover, Ariki
\cite{A} proved that if $\lam$ is a Kleshchev $r$-partition, then
$L(\lam)$, the top of $W(\lam)$ is simple, and  $\{L(\lam)\mid
\lam\,\, \rm Kleshchev\}$ provide a complete set of simple $\mathcal
{H}_n(q, Q)$-modules up to isomorphism. It is helpful to point out that
the set of Kleshchev $r$-partitions is the set $\Lambda_0$ we discussed previously.

Note that the cellularity of $\mathcal {H}$ ensures that every cell
(Specht) module lies in one block and we abuse notation to say that
an $r$-partition $\lam$ lies in a block $B$ if $W(\lam)$ lies in
$B$.

In \cite{HM}, Hu and Mathas introduced graded cellular algebras and
proved the following results.
\begin{lem}\cite[Theorem 5.8, Corollary 5.12, Corollary
6.18]{HM}\label{2.2} Let $\mathcal {H}_n(q, Q)$ be a cyclotomic
Hecke algebra of type $G(r, 1, n)$. Then
\begin{enumerate}
\item[(1)] The algebra $\mathcal {H}_n(q, Q)$ is a graded cellular algebra
with poset $(\mathscr{P}_{r,n}, \unrhd)$ and graded cellular
basis $\{\psi_{\fs,\ft}^\lam\mid\lam\in\mathscr{P}_{r,n}, \fs,
\ft\in {\rm Std}(\lam)\}$  {\rm(HM basis)}.
\item[(2)] Let $\mathcal
{H}_\beta$ be a block of $\mathcal {H}_n(q, Q)$. Then there exists
$\mathscr{P}_{r, n}^{\mathcal{H}_\beta}\subseteq\mathscr{P}_{r, n}$ such
that $\{\psi_{\fs,\ft}^\lam\mid\lam\in\mathscr{P}_{r, n}^{\mathcal{H}_\beta},
\fs, \ft\in {\rm Std}(\lam)\}$ is a graded cellular basis of
$\mathcal {H}_\beta$. In particular, $\mathcal {H}_\beta$ is a
graded symmetric cellular algebra with homogeneous trace form
$\tau_\beta$ {\rm(\cite[Definition 6.15]{HM})}of degree $-2{\rm def}\beta$.
\end{enumerate}
\end{lem}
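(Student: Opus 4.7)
The plan is to follow the strategy of Hu and Mathas by passing to the cyclotomic KLR algebra. The entry point is the Brundan--Kleshchev isomorphism $\mathcal{H}_n(q,Q)\cong R^{\Lambda}_n$, which transports a $\mathbb{Z}$-grading onto $\mathcal{H}_n(q,Q)$ together with homogeneous generators $e(i)$, $y_r$, $\psi_r$ and a homogeneous involution $\ast$. For part~(1), for each $\lam\in\mathscr{P}_{r,n}$ and each pair $\fs,\ft\in\mathrm{Std}(\lam)$ one defines
\[
\psi_{\fs,\ft}^{\lam} \;=\; \psi_{d(\fs)}^{\ast}\, y^{\lam}\, e(i^{\lam})\, \psi_{d(\ft)},
\]
where $d(\fs)$ is the permutation carrying the initial $\lam$-tableau $\ft^{\lam}$ to $\fs$ (with a chosen reduced expression) and $y^{\lam}$ is a carefully tuned polynomial in the $y_r$ chosen so that the element becomes homogeneous of degree $\deg(\fs)+\deg(\ft)$. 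A dimension count against the DJM cellular basis identifies the $\psi_{\fs,\ft}^{\lam}$ as a $K$-basis.

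The real work is to verify the cellular axioms. The $\ast$-invariance $\bigl(\psi_{\fs,\ft}^{\lam}\bigr)^{\ast}=\psi_{\ft,\fs}^{\lam}$ is built into the definition, but one must establish the triangular multiplication
\[
\psi_{\fs,\ft}^{\lam}\, \psi_{\fu,\fv}^{\mu} \;\equiv\; \delta_{\lam\mu}\Phi(\ft,\fu)\, \psi_{\fs,\fv}^{\lam} \pmod{\mathcal{H}(\rhd\lam)}.
\]
This is the principal obstacle. I would handle it by first working over a suitable deformation ring for which the specialized algebra is semisimple and carries Young seminormal forms; there the analogous identity can be verified explicitly, and then specialized back, using the Brundan--Kleshchev transport of DJM/Murphy-style relations to control leading terms. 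Homogeneity of the structure constants follows because the relations on both sides of the triangularity identity are homogeneous.

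For part~(2), the cyclotomic KLR algebra carries central idempotents $e_{\beta}=\sum_{i\in I_{\beta}^{n}}e(i)$ indexed by $\beta\in Q^+$, yielding $\mathcal{H}_n(q,Q)=\bigoplus_{\beta}\mathcal{H}_{\beta}$. Each $\psi_{\fs,\ft}^{\lam}$ is supported on a unique residue sequence, so restricting the HM basis to the $r$-partitions of residue content $\beta$ produces a graded cellular basis of $\mathcal{H}_{\beta}$; the set $\mathscr{P}_{r,n}^{\mathcal{H}_{\beta}}$ is defined by this condition. For the graded symmetric structure, I would invoke the Malle--Mathas symmetrizing trace on $\mathcal{H}_n(q,Q)$ and restrict it to $\mathcal{H}_{\beta}$. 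Homogeneity is verified by locating the dual of $\psi_{\fs,\ft}^{\lam}$ under the nondegenerate pairing: it must have degree $-\deg(\fs)-\deg(\ft)$. A combinatorial identity, relating the sum $\deg(\fs)+\deg(\ft)$ taken over standard tableaux of residue content $\beta$ to the defect invariant, then forces $\tau_{\beta}$ to be homogeneous of degree $-2\,\mathrm{def}\,\beta$. The symmetry $\tau_{\beta}(ab)=\tau_{\beta}(ba)$ and $\ast$-invariance descend from the ambient Malle--Mathas trace, completing the statement.
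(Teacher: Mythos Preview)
The paper does not supply its own proof of this lemma: it is stated purely as a citation of Hu and Mathas \cite[Theorem~5.8, Corollary~5.12, Corollary~6.18]{HM}, with no argument given in the present paper. Your sketch is a reasonable outline of the Hu--Mathas strategy itself (Brundan--Kleshchev isomorphism, construction of the $\psi$-basis, block idempotents $e_\beta$, homogeneous trace of degree $-2\,\mathrm{def}\,\beta$), so there is nothing substantive to compare against here; the one point of divergence is that in \cite{HM} the trace $\tau_\beta$ is constructed directly (see \cite[Definition~6.15]{HM}) rather than by restricting the Malle--Mathas symmetrizing form and then arguing homogeneity.
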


\begin{remarks}\label{2.3}
\begin{enumerate}
\item[(1)]\, It is easy to check that $\tau_\beta(a)=\tau_\beta(a^\ast)$ for all $a\in\mathcal
{H}_\beta$ and thus the dual basis of HM basis of $\mathcal{H}_\beta$ is cellular.
\item[(2)]\, For $\lam\in \mathscr{P}_{r,n}$, denote the graded cell (Specht)
module by $S^\lambda$. Hu and Mathas \cite{HM} pointed out that the
ungraded module $\underline{S}^\lambda$ coincides with the cell
module determined by the DJM basis.
\end{enumerate}
\end{remarks}

\medskip

\section{Radicals of weight one blocks}

We first recall the definition of the weight of
an $r$-partition given by Fayers in \cite{F1}. Let $\lam$
be an $r$-partition and let $Q$ be the cyclotomic
parameters of the algebra $\mathcal{H}_n(q;Q)$
as defined in Section 2.2. For $(i, j, k)\in[\lam]$, define the residue of the node
$(i, j, k)$ to be $\res\,(i, j, k)=q^{j-i}Q_k$ and define
$c_f(\lam)$ to be the number of nodes in $[\lam]$ of residue $f$.

\begin{dfn}\cite[(2.1)]{F1}\label{3.1}
Let $\lam$ be an $r$-partition of $n$. The weight of $\lam$ is
defined to be the integer
$$w(\lam)=(\sum_{i=1}^rc_{Q_i}(\lam))-\frac{1}{2}\sum_{f\in K^{\ast}}(c_f(\lam)-c_{qf}(\lam))^2.$$
\end{dfn}

\begin{remark}\label{3.2}
Note that two $r$-partitions $\lam$ and $\mu$ lie in the same block
if and only if $c_f(\lam)=c_f(\mu)$ for all $f\in K$ \cite{GL, LM}.
Then Definition \ref{3.1} implies that if $\lam$ and $\mu$ lie in
the same block, then $w(\lam)=w(\mu)$. So the weight of a block $B$
is defined to be the weight of any $r$-partition in $B$. Moreover,
if $\lam$ lies in a block $\mathcal {H}_\beta$, then $w(\lam)={\rm
def}\beta$ (see \cite{HM} for details).
\end{remark}

In order to prove our main result, we need to deal with the conjugate
of an $r$-partition. Let $\lam$ be a partition
of $n$. The conjugate of $\lam$ is defined to be a partition
$\lam'=(\lam'_1,\lam'_2,\cdots)$, where $\lam'_j$ is equal to the
number of nodes in column $j$ of $[\lam]$ for $j=1,2,\cdots$. The
conjugate of an $r$-partition $\lam$ is defined to be
$\lam=(\lam^{(r)'}, \cdots, \lam^{(1)'})$.
It is helpful to point out that in general $w(\lam)\neq w(\lam')$,
even for weight one $r$-partitions.
Let us give an example here.

\begin{example}\label{3.3}
Let $n=22$, $r=4$ and $e=9$. Assume that $Q_1=q$, $Q_2=q$,
$Q_3=q^5$ and $Q_4=q^2$. Take a $4$-partition $\lam=((3, 3, 2), (2, 1), (1^6) (2, 2, 1))$.
Then by direct computations $w(\lam)=1$ and $w(\lam')=6$.
\end{example}

However, we have the following lemma for a bipartition.

\begin{lem}\label{3.4}
Let $\lam$ be a bipartition. Then $w(\lam)=w(\lam')$.
\end{lem}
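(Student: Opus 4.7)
The plan is to track how the conjugation operation on bipartitions acts on residues, and then show that the two terms defining $w(\lambda)$ are each preserved under this action. The key observation is that the residue function is bilinear in a useful sense, and for $r=2$ the involution $\lambda \mapsto \lambda'$ induces a very simple involution on the set $K^\times$ of possible residues.

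First I would unpack the definition. For a bipartition $\lambda = (\lambda^{(1)}, \lambda^{(2)})$, the conjugate is $\lambda' = (\lambda^{(2)'}, \lambda^{(1)'})$, and a node $(i,j,k) \in [\lambda]$ corresponds under conjugation to the node $(j, i, 3-k) \in [\lambda']$. Since $\res(i,j,k) = q^{j-i}Q_k$ in $\lambda$ while the corresponding node in $\lambda'$ has residue $q^{i-j}Q_{3-k}$, a direct computation shows that if $f = \res(i,j,k)$ in $\lambda$, then the residue of the conjugate node in $\lambda'$ is exactly $Q_1 Q_2 / f$. Thus the map $\sigma : K^\times \to K^\times$ defined by $\sigma(f) = Q_1 Q_2/f$ is an involution, and counting nodes gives
\[
c_f(\lambda') = c_{\sigma(f)}(\lambda) \qquad \text{for all } f \in K^\times.
\]

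Next I would handle the two terms of $w(\lambda')$ separately. For the ``defect'' term $\sum_{i=1}^2 c_{Q_i}(\lambda')$, the identity above together with $\sigma(Q_1) = Q_2$ and $\sigma(Q_2) = Q_1$ immediately gives $c_{Q_1}(\lambda') + c_{Q_2}(\lambda') = c_{Q_2}(\lambda) + c_{Q_1}(\lambda)$. For the quadratic term, substituting $c_f(\lambda') = c_{\sigma(f)}(\lambda)$ and $c_{qf}(\lambda') = c_{\sigma(qf)}(\lambda) = c_{\sigma(f)/q}(\lambda)$ yields
\[
c_f(\lambda') - c_{qf}(\lambda') = c_{\sigma(f)}(\lambda) - c_{\sigma(f)/q}(\lambda).
\]
Setting $g = \sigma(f)/q$, the right side becomes $c_{qg}(\lambda) - c_g(\lambda)$, whose square equals $(c_g(\lambda) - c_{qg}(\lambda))^2$. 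Since $f \mapsto \sigma(f)/q$ is a bijection $K^\times \to K^\times$, reindexing the sum over $K^\times$ gives
\[
\sum_{f \in K^\times}(c_f(\lambda') - c_{qf}(\lambda'))^2 = \sum_{g \in K^\times}(c_g(\lambda) - c_{qg}(\lambda))^2.
\]
Combining both parts through Definition \ref{3.1} yields $w(\lambda') = w(\lambda)$.

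The only delicate point is the bookkeeping that turns the residue translation $f \mapsto Q_1 Q_2/f$ into a symmetry of both summands; in particular the sign flip in $c_{qf}(\lambda')$ gets absorbed by squaring, so that the formula works without hypotheses on the choice of $Q_1, Q_2$. I expect no real obstacle beyond this change-of-variables argument, and it is clear from the calculation why the statement does not extend to $r \ge 3$: with more than two components, conjugation reverses a longer tuple and no single involution of $K^\times$ maps residues of $\lambda$ to residues of $\lambda'$ in the way required.
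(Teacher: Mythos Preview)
Your argument is correct and follows essentially the same route as the paper's proof: the same bijection $(i,j,k)\mapsto(j,i,3-k)$ on nodes, the same residue identity $\res(i,j,k)\cdot\res(j,i,3-k)=Q_1Q_2$, and the same change of variables on $K^\times$ (your substitution $f\mapsto\sigma(f)/q=Q_1Q_2/(qf)$ is exactly the paper's bijection $\eta$). The only cosmetic difference is that you package the residue symmetry as the involution $\sigma(f)=Q_1Q_2/f$ before applying the shift by $q$, whereas the paper defines $\eta$ directly.
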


\begin{proof}
Let $\lam=(\lam^{(1)}, \lam^{(2)})$ be a bipartition. Then
$\lam'=(\lam^{(2)'}, \lam^{(1)'})$. Let $Q_1=q^{a_1}$,
$Q_2=q^{a_2}$. Define a bijection $\Theta: [\lam]\longrightarrow
[\lam']$ by $\Theta(i, j, k)=(j, i, 3-k)$. It is easy to check that
$\res(i, j, k)\res(j, i, 3-k)=Q_1Q_2.$ This implies that for $s=1,
2$ if $(i, j, k)$ is a node of residue $Q_s$ in $\lam$, then
$\Theta(i, j, k)$ is a node of residue $Q_{(3-s)}$ in $\lam'$.
Consequently,
$$\sum_{s=1}^2c_{Q_s}(\lam)=\sum_{s=1}^2c_{Q_s}(\lam').$$

On the other hand, for arbitrary $0\neq f\in K$, using the bijection
$\Theta$ again, we have $$c_f(\lam)=c_{q^{a_1+a_2}f^{-1}}(\lam')$$
and $$c_{qf}(\lam)=c_{q^{a_1+a_2-1}f^{-1}}(\lam').$$ Define a map
$\eta$ on $K^\ast$ by $\eta(f)=q^{a_1+a_2-1}f^{-1}$. Clearly, $\eta$
is a bijection and
$$|c_f(\lam)-c_{qf}(\lam)|=|c_{\eta(f)}(\lam')-c_{q\eta(f)}(\lam')|.$$
This gives that
\begin{eqnarray*}
\sum_{f\in K^\ast}(c_f(\lam)-c_{qf}(\lam))^2&=&\sum_{\eta(f)\in
K^\ast} (c_{\eta(f)}(\lam')-c_{q\eta(f)}(\lam'))^2\\
&=&\sum_{f\in K^\ast}(c_f(\lam')-c_{qf}(\lam'))^2
\end{eqnarray*}
and then the lemma follows from Definition \ref{3.1}.
\end{proof}

Although for $r$-partitions ($r\geq 3$), $w(\lam)\neq w(\lam')$ in general,
the following fact given by Fayers in \cite{F} is useful.

\begin{lem}\label{3.5}
Let $\lam$ be an $r$-partition. If $w(\lam)=c$ with parameters $q, Q_1, Q_2, \dots, Q_r$,
then $w(\lam')=c$ with parameter $q^{-1}, Q_r, Q_{r-1}, \dots, Q_1$.
\end{lem}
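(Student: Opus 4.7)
The plan is to mimic the proof of Lemma \ref{3.4}, but now with the parameters themselves transformed: the bijection between boxes of $\lambda$ and boxes of $\lambda'$ will respect residues once we replace $q$ by $q^{-1}$ and reverse the list $(Q_1,\dots,Q_r)$. Concretely, I would define
$$\Theta \colon [\lambda] \longrightarrow [\lambda'], \qquad \Theta(i,j,k)=(j,\,i,\,r+1-k).$$
This is a bijection for the same combinatorial reason as in Lemma \ref{3.4}: conjugating a single partition swaps rows and columns, and conjugating the $r$-partition $\lambda=(\lambda^{(1)},\dots,\lambda^{(r)})$ places $\lambda^{(k)'}$ in position $r+1-k$ of $\lambda'$.

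The next step is the residue calculation, which is the conceptual heart of the argument. Under the original parameters $(q,Q_1,\dots,Q_r)$, we have $\res(i,j,k)=q^{j-i}Q_k$. Under the new parameters $(q^{-1},Q_r,Q_{r-1},\dots,Q_1)$, writing $Q'_c=Q_{r+1-c}$ and $\res'(a,b,c)=(q^{-1})^{b-a}Q'_c$, I compute
$$\res'\!\bigl(\Theta(i,j,k)\bigr)=\res'(j,i,r+1-k)=q^{j-i}\,Q_{r+1-(r+1-k)}=q^{j-i}Q_k=\res(i,j,k).$$
Thus $\Theta$ preserves residues when we compare the old parameters on $[\lambda]$ with the new parameters on $[\lambda']$, which immediately yields
$$c_f(\lambda)=c_f(\lambda')\qquad\text{for every }f\in K,$$
where the left-hand side uses $(q,Q)$ and the right-hand side uses $(q^{-1},Q_r,\dots,Q_1)$.

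Finally, I match the two summands in Definition \ref{3.1} term by term. The cyclotomic part is immediate: since $\{Q'_1,\dots,Q'_r\}=\{Q_r,\dots,Q_1\}$ as sequences,
$$\sum_{i=1}^r c_{Q'_i}(\lambda')=\sum_{i=1}^r c_{Q_{r+1-i}}(\lambda')=\sum_{i=1}^r c_{Q_i}(\lambda).$$
For the quadratic part I need
$$\sum_{f\in K^{\ast}}\bigl(c_f(\lambda')-c_{q^{-1}f}(\lambda')\bigr)^{\!2}=\sum_{f\in K^{\ast}}\bigl(c_f(\lambda)-c_{qf}(\lambda)\bigr)^{\!2}.$$
Using $c_f(\lambda')=c_f(\lambda)$ and substituting $g=q^{-1}f$ in the sum on the left, the integrand becomes $c_{qg}(\lambda)-c_g(\lambda)$, whose square equals $(c_g(\lambda)-c_{qg}(\lambda))^2$, giving the required equality. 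Combining these two matchings with Definition \ref{3.1} proves $w(\lambda')=w(\lambda)=c$ in the new parameters.

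The only delicate point is bookkeeping: one must keep track of which pair of parameters governs which residue count, since $c_f(\lambda)$ and $c_f(\lambda')$ are computed in different algebras. Once the bijection $\Theta$ is shown to be residue-preserving in the sense above, the remainder is a routine reindexing of sums, exactly paralleling Lemma \ref{3.4}.
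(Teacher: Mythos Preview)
Your argument is correct. In the paper this lemma is not proved at all: it is simply quoted as a fact from Fayers \cite{F}, so there is no ``paper's own proof'' to compare against beyond the citation. Your direct verification via the residue-preserving bijection $\Theta(i,j,k)=(j,i,r+1-k)$ is exactly the natural elementary argument, and it goes through cleanly once one keeps straight (as you do) that the residue counts $c_f(\lambda)$ and $c_f(\lambda')$ are taken with respect to two different parameter sets. The reindexing in both summands of Definition~\ref{3.1} is routine and your substitution $g=q^{-1}f$ in the quadratic term is the right move.
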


We also need the the following lemma proved in \cite{LR} by Lin and Rui.

\begin{lem}\cite[Lemma 2.4]{LR}\label{3.6}
Let $\lam$ and $\mu$ be two $r$-partitions of $n$. Then
$\lam\unrhd\mu$ if and only if $\mu'\unrhd\lam'$.
\end{lem}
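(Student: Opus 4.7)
The plan is to translate both dominance conditions into inequalities on partial sums and then reduce matters to a lemma about a single pair of partitions. Since $(\lam')'=\lam$, it suffices to prove the implication $\lam\unrhd\mu\Rightarrow\mu'\unrhd\lam'$; the converse follows by applying this same implication to the pair $\mu',\lam'$.

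Set $P_{s,j}(\nu):=\sum_{t=1}^{s-1}|\nu^{(t)}|+\sum_{i=1}^{j}\nu_i^{(s)}$, so that $\lam\unrhd\mu$ means $P_{s,j}(\lam)\geq P_{s,j}(\mu)$ for all $(s,j)$. First I will expand $P_{s,j}(\lam')$ using $(\lam')^{(t)}=\lam^{(r-t+1)'}$, $|\lam^{(t)'}|=|\lam^{(t)}|$, the substitution $k=r-s+1$, and $\sum_{t=1}^{r}|\lam^{(t)}|=n$, rewriting $\mu'\unrhd\lam'$ as the family of inequalities, one for each $k\in\{1,\dots,r\}$ and $j\geq 0$,
\[
\sum_{t=1}^{k-1}|\lam^{(t)}|+\sum_{l\geq 1}(\lam^{(k)}_l-j)^{+}\;\geq\;\sum_{t=1}^{k-1}|\mu^{(t)}|+\sum_{l\geq 1}(\mu^{(k)}_l-j)^{+},
\]
via the identity $|\nu^{(k)}|-\sum_{i=1}^{j}(\nu^{(k)'})_i=\sum_{l}(\nu_l^{(k)}-j)^{+}$ (counting nodes of $\nu^{(k)}$ lying in columns strictly beyond the $j$-th).

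The reduction step is then a single-partition Key Lemma: if $\alpha,\beta$ are partitions and $C\geq 0$ satisfy $C+\sum_{i=1}^{j'}\alpha_i\geq\sum_{i=1}^{j'}\beta_i$ for all $j'\geq 0$, then $C+\sum_{l}(\alpha_l-j)^{+}\geq\sum_{l}(\beta_l-j)^{+}$ for all $j\geq 0$. I will apply this with $\alpha=\lam^{(k)}$, $\beta=\mu^{(k)}$, and $C=\sum_{t=1}^{k-1}(|\lam^{(t)}|-|\mu^{(t)}|)$; here $C\geq 0$ because $P_{k-1,\infty}(\lam)\geq P_{k-1,\infty}(\mu)$ (vacuous for $k=1$), and the hypothesis of the Key Lemma is precisely the collection $P_{k,j'}(\lam)\geq P_{k,j'}(\mu)$, so its conclusion delivers exactly the displayed inequality above.

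The hard part will be the Key Lemma itself, since the classical partition-conjugation result assumes $|\alpha|=|\beta|$ and $C=0$, whereas here the shift $C$ is essential (the components $\lam^{(k)}$ and $\mu^{(k)}$ generally differ in size). My plan is an argument by contradiction: assume $\sum_l(\beta_l-j_0)^{+}>C+\sum_l(\alpha_l-j_0)^{+}$ for some $j_0$, and set $p:=\alpha'_{j_0+1}$, $q:=\beta'_{j_0+1}$ (the numbers of parts of $\alpha,\beta$ that strictly exceed $j_0$). Writing $\sum_l(\nu_l-j_0)^{+}=\sum_{l=1}^{\pi}\nu_l-\pi j_0$, with $\pi$ the appropriate count, I split into the cases $p\leq q$ (using that $\alpha_l\leq j_0$ for $p<l\leq q$ gives $\sum_{l=p+1}^{q}\alpha_l\leq(q-p)j_0$) and $p>q$ (using that $\alpha_l>j_0$ strictly for $q<l\leq p$ gives $\sum_{l=q+1}^{p}(\alpha_l-j_0)>0$). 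In each case a direct rearrangement yields $\sum_{l=1}^{q}\beta_l>C+\sum_{l=1}^{q}\alpha_l$, which contradicts the hypothesis applied at $j'=q$.
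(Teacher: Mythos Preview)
The paper does not supply its own proof of this lemma: it simply quotes the statement and attributes it to \cite[Lemma~2.4]{LR}. So there is no argument in the paper to compare yours against; your proposal is strictly more than what the paper does.

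Your argument is correct. The rewriting of $\mu'\unrhd\lam'$ via the substitution $k=r-s+1$ and the identity $|\nu^{(k)}|-\sum_{i=1}^{j}(\nu^{(k)'})_i=\sum_{l}(\nu^{(k)}_l-j)^{+}$ is accurate, and the reduction to the Key Lemma with $C=\sum_{t=1}^{k-1}(|\lam^{(t)}|-|\mu^{(t)}|)$ is exactly right (this is where the fact $|\lam|=|\mu|=n$ enters, to turn tail sums into head sums). The Key Lemma itself is the genuine content, since as you note the classical partition statement assumes equal sizes; your case split on $p\lessgtr q$ and the bounds $\sum_{l=p+1}^{q}\alpha_l\le(q-p)j_0$ (when $p\le q$) and $\sum_{l=q+1}^{p}(\alpha_l-j_0)\ge 0$ (when $p>q$) both lead cleanly to $\sum_{l=1}^{q}\beta_l>C+\sum_{l=1}^{q}\alpha_l$, contradicting the hypothesis at $j'=q$. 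One cosmetic remark: in Case~2 you only need nonnegativity of $\sum_{l=q+1}^{p}(\alpha_l-j_0)$, not strict positivity, so the integrality observation is unnecessary there.
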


Before we study the radical of weight one blocks of
$\mathcal{H}_n(q, Q)$, let us recall a general result on radicals of
symmetric cellular algebras given by Li in \cite{L2}.

\begin{lem}\cite[Lemma 3.1]{L2}\label{3.7}
Let $A$ be a symmetric cellular algebra. Suppose that
\begin{enumerate}
\item[(1)]\,$\sum\limits_{\lam\in\Lambda_{3}}n_{\lam}^{2}=
\sum\limits_{\lam\in\Lambda_{4}}n_{\lam}^{2},$ where $n_\lam$
is the number of the elements in $M(\lam)$;
\item[(2)]\,$\sum\limits_{\lam\in\Lambda_{2}}(\dim_{K}\rad\lam)^{2}
=\sum\limits_{\lam\in\Lambda_{2}}(\dim_{K}L_{\lam})^{2}.$
\end{enumerate}
Then $\rad A=I$, where $I$ is the nilpotent ideal of $A$ defined in \cite{L}.
\end{lem}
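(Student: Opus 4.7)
The plan is to establish $I=\rad A$ by a dimension count. Since \cite{L} already shows that $I$ is a nilpotent ideal of $A$, we have $I\subseteq\rad A$ automatically, and so it suffices to prove $\dim I=\dim\rad A$ under the stated hypotheses.

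For $\dim\rad A$, I would invoke the Wedderburn decomposition. Since $\{L(\lam)\mid\lam\in\Lambda_0\}$ is a complete set of simple $A$-modules, with $\dim L(\lam)=n_\lam$ for $\lam\in\Lambda_1$ and $\dim L(\lam)=n_\lam-\dim\rad\lam$ for $\lam\in\Lambda_2$, one has
\[
\dim(A/\rad A)=\sum_{\lam\in\Lambda_1}n_\lam^2+\sum_{\lam\in\Lambda_2}(n_\lam-\dim\rad\lam)^2.
\]
Subtracting from $\dim A=\sum_{\lam\in\Lambda}n_\lam^2$ and expanding yields
\[
\dim\rad A=\sum_{\lam\in\Lambda_3}n_\lam^2+\sum_{\lam\in\Lambda_2}\bigl(2\dim L(\lam)\cdot\dim\rad\lam+(\dim\rad\lam)^2\bigr).
\]

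For $\dim I$, I would read off the explicit construction of the ideal from \cite{L}. The natural spanning set for $I$ splits along the stratification of $\Lambda$: for $\lam\in\Lambda_3$ the full cellular basis $\{C^\lam_{S,T}\}$ lies in $\rad A$ because $\Phi\equiv 0$ on $W(\lam)$, contributing $\sum_{\Lambda_3}n_\lam^2$; for $\lam\in\Lambda_4$ the elements $C^\lam_{S,S}D^\lam_{S,S}$ and their translates are nilpotent via the identity $(C^\lam_{S,S}D^\lam_{S,S})^2=k_\lam C^\lam_{S,S}D^\lam_{S,S}=0$, contributing $\sum_{\Lambda_4}n_\lam^2$; and for $\lam\in\Lambda_2$ basis elements attached to $\rad\lam\subseteq W(\lam)$ account for the remaining mixed terms. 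Hypothesis (1) trades the $\Lambda_4$-contribution for the $\Lambda_3$-contribution, and hypothesis (2) rearranges the $\Lambda_2$-contribution into the form $2\dim L(\lam)\cdot\dim\rad\lam+(\dim\rad\lam)^2$, producing exactly $\dim\rad A$.

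The main obstacle is verifying that the candidate spanning set of $I$ is actually a basis, so that the count gives $\dim I$ rather than merely an upper bound. This requires the simultaneous use of both the cellular filtration by $A(<\lam)$ and its dual filtration by $A(>\lam)$, together with the orthogonality relation $\tau(C^\lam_{S,T}D^\mu_{U,V})=\delta_{\lam\mu}\delta_{SV}\delta_{TU}$ between the cellular basis and its dual. Once linear independence of the three families is secured, the two numerical hypotheses of the lemma combine with the Wedderburn formula above to force $\dim I=\dim\rad A$, and the lemma follows.
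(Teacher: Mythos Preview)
The paper does not prove this lemma at all: it is quoted as \cite[Lemma~3.1]{L2} and used as a black box, so there is no in-paper argument to compare against.

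Your overall strategy is the right one and matches what is done in \cite{L,L2}: use $I\subseteq\rad A$ (nilpotency) and then close the gap by a dimension count, with $\dim(A/\rad A)=\sum_{\lam\in\Lambda_0}(\dim L(\lam))^2$ coming from Wedderburn (cellular algebras are split, so this is legitimate). Where your sketch is loose is the computation of $\dim I$. You never write down the actual generating set of $I$ from \cite{L}, and your informal bookkeeping is inconsistent: you say the $\Lambda_3$ layer contributes $\sum_{\Lambda_3}n_\lam^2$ to $\dim I$ \emph{and} that hypothesis~(1) ``trades the $\Lambda_4$-contribution for the $\Lambda_3$-contribution''; both cannot be operative at once. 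Likewise, ``rearranges the $\Lambda_2$-contribution'' is an assertion, not a calculation---you have not said what the $\Lambda_2$-piece of $I$ actually is, so it is not visible why hypothesis~(2) is exactly what is needed. The substantive content of the lemma is precisely the closed formula for $\dim I$ (or equivalently for $\dim\rad A-\dim I$) proved in \cite{L}, which already absorbs the linear-independence issue you flag; once that formula is on the table, hypotheses (1) and (2) kill the two summands of the difference. As written, your proposal defers that step back to \cite{L}, which is effectively what the present paper does by citing \cite{L2}.
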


From now on, we investigate the Ariki-Koike algebra
$\mathcal{H}_n(q, Q)$ using the graded cellular basis
$\psi_{\fs\ft}^\lambda$ (HM basis). According to Remark \ref{2.3}, the results
obtained in \cite{F1} on Specht modules still hold for
$\underline{S}^\lambda$. For convenience, we denote the cell modules
by $W(\lam)$.

Fayers \cite{F1} studied decomposition numbers of weight one blocks
of $\mathcal{H}_n(q, Q)$. His result is a useful tool for
characterizing the subset $\Lambda_2$.

\begin{lem}\cite[Theorm 4.12]{F1} \label{3.8}
Suppose that $B$ is a block of $\mathcal{H}_n(q, Q)$ of weight one.
Then the $r$-partitions in $B$ are
$\lam_1\vartriangleleft\cdots\vartriangleleft\lam_s$ for some
$s\leqslant e$. All these $r$-partitions are Kleshchev except
$\lam_s$, and the decomposition number
$$[W(\lam_i) : L(\lam_j)]=\begin{cases}
1,& \text{if}\,\, i=j\,\, \text{or}\,\, i=j+1;\\
0,  & \text{otherwise}.
\end{cases}
$$
\end{lem}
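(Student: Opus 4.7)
The plan is to combine a combinatorial classification of the $r$-partitions in $B$ with a direct computation of decomposition numbers via canonical bases. First, using the residue-content characterization of blocks recorded in Remark \ref{3.2} together with the weight formula of Definition \ref{3.1}, I would show that any two $r$-partitions sharing a weight-one block differ by relocating a single rim $e$-hook, possibly across components. An abacus-style argument on the $r$-tuple of beta-sequences reduces the enumeration of such hook positions to counting empty runners meeting a bounded region, which yields the bound $s \leq e$.

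Next, I would prove that these $r$-partitions form a chain under the dominance order. The key observation is that rim $e$-hook insertions admit a natural linear order: within a single component by leg length, and between components lexicographically on the component index in accordance with the convention for $\unlhd$ recorded in Section 2.2. A careful case analysis, simplified by the conjugation symmetry afforded by Lemmas \ref{3.5} and \ref{3.6}, then gives the total order $\lam_1 \lhd \lam_2 \lhd \cdots \lhd \lam_s$.

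To identify the unique non-Kleshchev $r$-partition, I would use the crystal-theoretic characterization: Kleshchev $r$-partitions are the sources of strings in the $\widehat{\mathfrak{sl}}_e$-Fock-space crystal. Within a weight-one block the crystal graph restricted to $B$ forms a single string of length $s$, so exactly one element fails to be Kleshchev; tracing the crystal arrows through the chain shows that this element is the dominance-maximum $\lam_s$ rather than the minimum $\lam_1$.

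The main obstacle is the decomposition matrix, for which I would appeal to Ariki's theorem: graded decomposition numbers coincide with coefficients of canonical basis elements $G(\mu)$ expressed in the standard basis of the Fock space over $U_v(\widehat{\mathfrak{sl}}_e)$. The chain structure, together with bar-invariance and unitriangularity of the canonical basis, forces $G(\lam_j) = \lam_j + v\,\lam_{j+1}$ for $j < s$, since the chain is short enough that the only potential bar-cancellation partner for $\lam_j$ is its neighbour $\lam_{j+1}$. Specializing at $v=1$ then yields the stated bidiagonal decomposition matrix. Passing from characteristic zero to arbitrary fields would require additional input, either a direct Jantzen-sum-formula computation on weight-one blocks or a reduction via adjustment matrices.
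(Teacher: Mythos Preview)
The paper does not prove this lemma: it is quoted from Fayers \cite[Theorem~4.12]{F1} and used as a black box, so there is no proof here to compare your attempt against. Your outline is effectively a proposed reconstruction of Fayers' argument rather than something to match to the present paper.

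As a reconstruction it follows roughly the right strategy, but two steps are not yet arguments. First, bar-invariance and unitriangularity by themselves do not force $G(\lam_j)=\lam_j+v\,\lam_{j+1}$: those conditions permit $G(\lam_j)$ to involve any of $\lam_{j+1},\dots,\lam_s$ with coefficients in $v\mathbb{Z}[v]$, so an honest LLT computation (or an induction using the divided-power operators on the Fock space) is still required to kill the higher terms. Second, Ariki's theorem yields decomposition numbers only in characteristic zero, whereas the lemma is asserted over an arbitrary field $K$; the passage you defer to adjustment matrices or the Jantzen sum formula is precisely the substantive content in positive characteristic and cannot be left as a remark. Your crystal-string description of the Kleshchev set is also loose---the crystal operators $\tilde e_i,\tilde f_i$ change residue content and hence move between blocks, so ``the crystal graph restricted to $B$'' is not literally a string---though this part can be repaired with a more careful formulation.
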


Now we can completely describe $\Lambda^B_j$
for $0\leq j\leq 4$ for a given block of $\mathcal{H}_n(q, Q)$ of weight one.

\begin{lem}\label{3.9}
Suppose $B$ is a block of $\mathcal{H}_n(q, Q)$ of weight 1 and the
$r$-partitions in $B$ are
$\lam_1\vartriangleleft\cdots\vartriangleleft\lam_s$. Then:
\begin{enumerate}
\item[(1)]\,$\Lambda_0^B=\{\lam_1, \cdots, \lam_{s-1}\}$;
\item[(2)]\,$\Lambda_1^B=\{\lam_1\}$;
\item[(3)]\,$\Lambda_2^B=\{\lam_2, \cdots, \lam_{s-1}\}$.
\item[(4)]\,$\Lambda_3^B=\{\lam_s\}$;
\item[(5)]\,$\Lambda_4^B=\{\lam_1\}$.
\end{enumerate}
Moreover, $W(\lam_s)$ is simple and $\rad(W(\lam_i))\cong
L(\lam_{i-1})$ for $2\leq i\leq s$.
\end{lem}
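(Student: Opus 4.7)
The plan is to deduce everything from Fayers' decomposition result (Lemma~\ref{3.8}) together with the generalities about symmetric cellular algebras recalled in Section~2.1. First I would identify $\Lambda_0^B$ with the set of Kleshchev $r$-partitions in $B$, using the general fact recalled in Section~2.1 that $\Lambda_0$ indexes the simple modules together with its block-wise version in Lemma~\ref{2.1}. By Lemma~\ref{3.8} this set is exactly $\{\lam_1,\ldots,\lam_{s-1}\}$, which proves (1) and simultaneously $\Lambda_3^B=\Lambda^B\setminus\Lambda_0^B=\{\lam_s\}$, giving (4).

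Next I would read off the structure of each cell module from the decomposition numbers in Lemma~\ref{3.8}. For $i=1$, $L(\lam_1)$ is the only composition factor of $W(\lam_1)$, so $W(\lam_1)\cong L(\lam_1)$ and $\rad\lam_1=\rad W(\lam_1)=0$, placing $\lam_1$ in $\Lambda_1^B$. For $2\leq i\leq s-1$, $W(\lam_i)$ has exactly two composition factors $L(\lam_i)$ (the unique simple head, since $\lam_i\in\Lambda_0$) and $L(\lam_{i-1})$; hence $\rad W(\lam_i)\cong L(\lam_{i-1})\neq 0$, so $\lam_i\in\Lambda_2^B$. This establishes (2) and (3). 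For $i=s$, the only composition factor of $W(\lam_s)$ is $L(\lam_{s-1})$, so $W(\lam_s)\cong L(\lam_{s-1})$ is simple; because $\lam_s\in\Lambda_3^B$ the bilinear form $\Phi_{\lam_s}$ vanishes identically and $\rad\lam_s=W(\lam_s)\cong L(\lam_{s-1})$, matching the displayed Loewy formula under the convention $\rad(W(\lam)):=\rad\lam$.

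The genuinely non-routine point is (5). Since $\Lambda_4^B\subseteq\Lambda_1^B=\{\lam_1\}$, it suffices to show $k_{\lam_1}=0$. I would invoke the criterion from \cite{L}: for $\lam\in\Lambda_1$ one has $k_\lam\neq 0$ if and only if $W(\lam)=L(\lam)$ is projective. The task thus reduces to showing $L(\lam_1)$ is not projective. Since $\mathcal{H}_\beta$ is symmetric it is self-injective, so projective equals injective for $\mathcal{H}_\beta$-modules. If $L(\lam_1)$ were injective, the inclusion $L(\lam_1)=\rad W(\lam_2)\hookrightarrow W(\lam_2)$ would split and give $W(\lam_2)\cong L(\lam_1)\oplus L(\lam_2)$, contradicting the fact that $W(\lam_2)$ has the unique simple head $L(\lam_2)$. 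Hence $L(\lam_1)$ is not projective, $k_{\lam_1}=0$, and $\lam_1\in\Lambda_4^B$, proving (5). The main obstacle is invoking the projectivity criterion for $k_\lam$ cleanly from \cite{L}, since it is not restated in the excerpt; all other steps reduce to direct applications of Fayers' theorem and the cellular module theory of Section~2.1. A small sanity check is that a weight one block must satisfy $s\geq 2$ so that $\lam_2$ exists, since $s=1$ together with Lemma~\ref{3.8} would force $\lam_1$ to be non-Kleshchev and hence $B$ to have no simple modules.
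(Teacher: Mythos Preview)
Your argument is correct. Parts (1)--(4) and the ``Moreover'' clause are handled exactly as in the paper, by reading off the structure of each $W(\lam_i)$ from Fayers' decomposition numbers (Lemma~\ref{3.8}); the only cosmetic difference is that the paper obtains $\lam_1\in\Lambda_1^B$ from minimality via \cite[Lemma~2.15]{M}, whereas you get it from $[W(\lam_1):L(\lam_1)]=1$.

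For (5) the approaches genuinely diverge. The paper does not argue directly with $W(\lam_2)$; instead it uses the graded structure: since the homogeneous trace on $\mathcal{H}_\beta$ has degree $-2\,{\rm def}\,\beta=-2$, \cite[Theorem~3.4]{LZD} forces every cell module of $B$ to be non-projective, and then \cite[Theorem~4.4]{LX} gives $k_{\lam_i}=0$ for \emph{all} $i$. Your route---$L(\lam_1)$ injective would split $\rad W(\lam_2)\hookrightarrow W(\lam_2)$, contradicting the unique simple head of a cell module with $\lam_2\in\Lambda_0$---is more elementary and avoids the graded machinery entirely. Two remarks: the projectivity criterion you invoke is stated in \cite{LX} rather than \cite{L}; and note that the paper's stronger conclusion $k_{\lam_i}=0$ for all $i$ is reused verbatim in the proof of Corollary~\ref{3.14}(2), so if you adopt your argument here you would need to supply that extra fact separately there (your splitting trick extends: for $2\le i\le s-1$, $L(\lam_i)\hookrightarrow \rad W(\lam_{i+1})$ cannot split either). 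Your sanity check that $s\ge 2$ is correct and necessary for your argument.
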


\begin{proof}
These results all follow immediately from Lemma
\ref{3.10} except for the fact that $k_{\lam_1}= 0$.
In fact, since $\lam_1$ is minimal, in view of \cite[Lemma 2.15]{M},
$\lam_1\in \Lambda_1^B$. Moreover, since $B$ is of weight one, it
follows from Lemma \ref{2.2} that $B$ is a graded symmetric cellular
algebra with homogeneous trace form of degree $-2$. We have from
\cite[Theorem 3.4]{LZD} that none of the cell modules are projective. Then one
deduce by \cite[Theorem 4.4]{LX} that $k_{\lam_i}= 0$ for $i=1, \cdots, s$. Thus
$\lam_1\in \Lambda_4^B$.
\end{proof}

Let $\mathcal {H}'_n(q, Q)$ denote the Ariki-Koike algebra
with parameters $q^{-1}, Q_r, \cdots, Q_1$. To distinguish from $\mathcal {H}_n(q, Q)$,
we denote the $r$-partitions by $\overline{\lam}$ for $\mathcal {H}'_n(q, Q)$.
It is necessary to note that as $r$-partitions, $\lam=\overline{\lam}$.
Then we have the following lemma, whose proof is obvious by Lemma \ref{3.5}, \ref{3.6} and \ref{3.9}.

\begin{lem}\label{3.10}
Suppose $B$ is a block of $\mathcal{H}_n(q, Q)$ of weight 1 and the
$r$-partitions in $B$ are
$\lam_1\vartriangleleft\cdots\vartriangleleft\lam_s$. Then there
is a weight 1 block $\overline{B}$ of $\mathcal {H}'_n(q, Q)$
and the $r$-partitions in $\overline{B}$ are $\overline{\lam'_s}
\vartriangleleft\cdots\vartriangleleft\overline{\lam'_1}$.
\begin{enumerate}
\item[(1)]\,$\Lambda_0^{\overline{B}}=\{\overline{\lam'_s}, \cdots, \overline{\lam'_{2}}\}$;
\item[(2)]\,$\Lambda_1^{\overline{B}}=\{\overline{\lam'_s}\}$;
\item[(3)]\,$\Lambda_2^{\overline{B}}=\{\overline{\lam'_{s-1}}, \cdots, \overline{\lam'_{2}}\}$.
\item[(4)]\,$\Lambda_3^{\overline{B}}=\{\overline{\lam'_1}\}$;
\item[(5)]\,$\Lambda_4^{\overline{B}}=\{\overline{\lam'_s}\}$.
\end{enumerate}
Moreover, $W(\overline{\lam'_1})$ is simple and $\rad(W(\overline{\lam'_i}))\cong
L(\overline{\lam'_{i+1}})$ for $2\leq i\leq s$.
\end{lem}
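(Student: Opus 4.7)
The plan is to transport the weight-one block $B$ of $\mathcal{H}_n(q,Q)$ to a block $\overline{B}$ of $\mathcal{H}'_n(q,Q)$ via conjugation of $r$-partitions combined with the change of parameters $(q,Q_1,\ldots,Q_r) \rightsquigarrow (q^{-1},Q_r,\ldots,Q_1)$, and then to apply Lemma \ref{3.9} verbatim to $\overline{B}$. The three inputs are the residue-preserving bijection already used in the proof of Lemma \ref{3.4} (now generalized from $r=2$ to arbitrary $r$), together with Lemmas \ref{3.5} and \ref{3.6}.

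First I would verify that conjugation, under this change of parameters, induces a bijection between blocks of $\mathcal{H}_n(q,Q)$ and blocks of $\mathcal{H}'_n(q,Q)$. Explicitly, for an $r$-partition $\lam$ the map $\Theta(i,j,k) = (j, i, r+1-k)$ is a bijection $[\lam] \to [\lam']$, and the residue $q^{j-i}Q_k$ of $(i,j,k)$ under the old parameters equals the residue $(q^{-1})^{i-j}Q_k = q^{j-i}Q_k$ of $\Theta(i,j,k)$ under the new parameters (since the $k$-th component of the conjugate uses the cyclotomic parameter $Q_{r+1-(r+1-k)} = Q_k$). Hence $c_f(\lam) = \overline{c_f}(\overline{\lam'})$ for every $f \in K$, and by Remark \ref{3.2} two $r$-partitions lie in the same block of $\mathcal{H}_n(q,Q)$ iff their conjugates lie in the same block of $\mathcal{H}'_n(q,Q)$. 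Combined with Lemma \ref{3.5}, the image $\overline{B}$ is a weight-one block of $\mathcal{H}'_n(q,Q)$ whose $r$-partitions are exactly $\overline{\lam'_1}, \ldots, \overline{\lam'_s}$.

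Second, Lemma \ref{3.6} tells us that conjugation reverses the dominance order, so the given chain $\lam_1 \vartriangleleft \cdots \vartriangleleft \lam_s$ in $B$ becomes the chain $\overline{\lam'_s} \vartriangleleft \cdots \vartriangleleft \overline{\lam'_1}$ in $\overline{B}$. Finally, Lemma \ref{3.9} applied to $\overline{B}$ (with the roles of $\lam_1$ and $\lam_s$ there played here by $\overline{\lam'_s}$ and $\overline{\lam'_1}$ respectively) immediately yields the descriptions of $\Lambda_i^{\overline{B}}$ for $i=0,\ldots,4$, the simplicity of $W(\overline{\lam'_1})$, and the asserted radical composition factors of the remaining cell modules.

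The only step that requires any genuine computation is the residue identity verifying that the bijection $\Theta$ intertwines the two residue functions; once that is checked, the rest of the argument is a purely formal relabeling of Lemma \ref{3.9}, which is presumably why the authors describe the proof as obvious.
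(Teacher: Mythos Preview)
Your proposal is correct and follows the same route the paper indicates: the paper simply says the lemma ``is obvious by Lemma~\ref{3.5}, \ref{3.6} and \ref{3.9},'' and your argument is exactly the unpacking of that sentence --- Lemma~\ref{3.5} for the weight, Lemma~\ref{3.6} for the reversal of dominance, and Lemma~\ref{3.9} applied verbatim to $\overline{B}$. Your explicit residue computation via the bijection $\Theta$ (together with Remark~\ref{3.2}) is the natural way to justify that the conjugates $\overline{\lam'_1},\dots,\overline{\lam'_s}$ constitute precisely the $r$-partitions of a single block $\overline{B}$, a point the paper leaves implicit.
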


Define a new algebra ${\rm H}_n=\mathcal{H}_n(q, Q)\bigoplus\mathcal {H}'_n(q, Q)$.
Clearly, ${\rm H}_n$ is a cellular algebra. Putting the HM basis of $\mathcal{H}_n(q, Q)$
and $\mathcal {H}'_n(q, Q)$ together gives a cellular basis of ${\rm H}_n$.
Denote by $\mathcal {B}$ the subalgebra of ${\rm H}_n$, which is the direct
sum of all blocks of weight 1 in $\mathcal{H}_n(q, Q)$ and $\mathcal {H}'_n(q, Q)$.
Then according to Lemma \ref{2.2}, $\mathcal {B}$ is a graded cellular
algebra with a cellular basis being the union of the cellular basis of the blocks.
Denote the poset by $\Lambda^{\mathcal {B}}$ and define $\Lambda_i^{\mathcal {B}}$ naturally for $i=0,1,2,3,4$.
We check the conditions (1) and (2) of
Lemma \ref{3.7} for $\mathcal {B}$. Let us divide the process into
two lemmas.

\begin{lem}\label{3.11}
Keep notations as above. Then
$\sum\limits_{\lam\in\Lambda_{3}^\mathcal
{B}}n_{\lam}^{2}=\sum\limits_{\lam\in\Lambda_{4}^\mathcal
{B}}n_{\lam}^{2}$, where $n_{\lam}$ is the number of standard $\lam$-tableaux.
\end{lem}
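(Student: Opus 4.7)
The plan is to organize both sides of the equation block-by-block, pairing each weight one block of $\mathcal{H}_n(q, Q)$ with the conjugate-parameter block of $\mathcal{H}'_n(q, Q)$ furnished by Lemma \ref{3.10}, and to exploit the fact that conjugation of $r$-partitions is a number-of-standard-tableaux-preserving involution.

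First I would use Lemmas \ref{3.9} and \ref{3.10} to make the two sums explicit. For a weight one block $B$ of $\mathcal{H}_n(q, Q)$ with $r$-partitions $\lambda_1 \lhd \cdots \lhd \lambda_s$, Lemma \ref{3.9} gives $\Lambda_3^B = \{\lambda_s\}$ and $\Lambda_4^B = \{\lambda_1\}$. For the paired weight one block $\overline{B}$ of $\mathcal{H}'_n(q, Q)$ described in Lemma \ref{3.10}, one gets $\Lambda_3^{\overline{B}} = \{\overline{\lambda'_1}\}$ and $\Lambda_4^{\overline{B}} = \{\overline{\lambda'_s}\}$. The assignment $B \mapsto \overline{B}$ is a bijection between weight one blocks of the two algebras (the inverse is obtained by applying Lemma \ref{3.10} with the roles of the two Ariki-Koike algebras interchanged, using that $q$ and $q^{-1}$ play symmetric roles), so both sums in the statement decompose into sums over these pairs.

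The key combinatorial input is that $n_\lambda = n_{\lambda'}$ for any $r$-partition $\lambda$. This follows from the obvious bijection sending a standard $\lambda$-tableau $\mathfrak{t} = (\mathfrak{t}^{(1)}, \ldots, \mathfrak{t}^{(r)})$ to the component-wise transpose, read in reverse order, namely $((\mathfrak{t}^{(r)})^T, \ldots, (\mathfrak{t}^{(1)})^T)$, which is a standard $\lambda'$-tableau. Since as $r$-partitions $\overline{\lambda'_i} = \lambda'_i$, we therefore have $n_{\overline{\lambda'_i}} = n_{\lambda'_i} = n_{\lambda_i}$ for each $i$.

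With these identifications in hand, the contribution of the pair $(B, \overline{B})$ to the left-hand side of the claimed equality is $n_{\lambda_s}^2 + n_{\overline{\lambda'_1}}^2 = n_{\lambda_s}^2 + n_{\lambda_1}^2$, while its contribution to the right-hand side is $n_{\lambda_1}^2 + n_{\overline{\lambda'_s}}^2 = n_{\lambda_1}^2 + n_{\lambda_s}^2$; these are manifestly equal. Summing over all weight one blocks of $\mathcal{H}_n(q, Q)$ (equivalently, over all such pairs) yields the result. I do not expect a serious obstacle here: the whole argument is just bookkeeping once the bijection between weight one blocks of $\mathcal{H}_n(q, Q)$ and $\mathcal{H}'_n(q, Q)$ given by Lemma \ref{3.10} is in place, together with the elementary equality $n_\lambda = n_{\lambda'}$.
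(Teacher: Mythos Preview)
Your proposal is correct and follows essentially the same route as the paper: pair each weight one block $B$ of $\mathcal{H}_n(q,Q)$ with the conjugate block $\overline{B}$ of $\mathcal{H}'_n(q,Q)$ via Lemma~\ref{3.10}, observe that the unique $\Lambda_3$- and $\Lambda_4$-elements of $B$ and $\overline{B}$ are interchanged under conjugation, and conclude using $n_\lambda = n_{\lambda'}$. The only cosmetic difference is that you make the block-pairing bijection and the tableau bijection more explicit than the paper does.
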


\begin{proof}
Let $\lam$ be an arbitrary weight one $r$-partition in a weight one block $B$ in $\mathcal{H}_n(q, Q)$.
Then Lemma \ref{3.5} implies that $\overline{\lam'}$ is in a weight one block $\overline{B}$ in $\mathcal{H}'_n(q, Q)$.
By Lemma \ref{3.9}, the $r$-partitions in $B$ are
$\lam_1\vartriangleleft\cdots\vartriangleleft\lam_s$, and
 $\lam_1\in\Lambda_4^B$ and
$\lam_s\in\Lambda_3^B$. On the other hand, we have from Lemmas
\ref{3.6} and \ref{3.10} that the $r$-partitions in $\overline{B}$ are
$\overline{\lam_s'}\vartriangleleft\cdots\vartriangleleft\overline{\lam_1'}$. According to
Lemma \ref{3.10}, $\overline{\lam_s'}\in \Lambda_4^{\overline{B}}$ and $\overline{\lam_1'}\in
\Lambda_3^{\overline{B}}$. Then Lemma \ref{2.1} shows that
$\lam_1\in\Lambda_4^\mathcal {B}$, $\lam_s\in\Lambda_3^\mathcal
{B}$, $\overline{\lam_s'}\in \Lambda_4^\mathcal {B}$ and $\overline{\lam_1'}\in
\Lambda_3^\mathcal {B}$. Note that $n_{\lam}=n_{\lam'}$. Letting
$\lam$ run over all of the weight one $r$-partitions, the lemma
follows.
\end{proof}

The proof of the next lemma is similar to that of \cite[Theorem
3.5]{L2}. For the convenience of readers, we write it out here.

\begin{lem}\label{3.12}
$\sum\limits_{\lam\in\Lambda_{2}^\mathcal {B}}(\dim_{K}\rad\lam)^{2}
=\sum\limits_{\lam\in\Lambda_{2}^\mathcal {B}}(\dim_{K}L_{\lam})^{2}.$
\end{lem}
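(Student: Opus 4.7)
My plan is to pair each weight-one block $B$ of $\mathcal{H}_n(q,Q)$ with the weight-one block $\overline{B}$ of $\mathcal{H}'_n(q,Q)$ furnished by Lemma \ref{3.10}, and to show that the contributions to the two sides of the identity agree on each such pair. Since $\mathcal{B}$ is, by construction, the direct sum of all such blocks across both algebras, and the correspondence $B\leftrightarrow\overline{B}$ is a bijection on weight-one blocks, the global equality will then follow by summing over pairs.

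Fix such a pair, writing the partitions in $B$ as $\lam_1\lhd\cdots\lhd\lam_s$, so that by Lemma \ref{3.10} the partitions in $\overline{B}$ are $\overline{\lam_s'}\lhd\cdots\lhd\overline{\lam_1'}$. Lemma \ref{3.9} gives $\Lambda_2^B=\{\lam_2,\ldots,\lam_{s-1}\}$ with $\rad\lam_i\cong L(\lam_{i-1})$, and Lemma \ref{3.10} gives $\Lambda_2^{\overline{B}}=\{\overline{\lam_2'},\ldots,\overline{\lam_{s-1}'}\}$ with $\rad\overline{\lam_i'}\cong L(\overline{\lam_{i+1}'})$. Substituting these identifications into both sums and reindexing, the middle terms telescope and the contribution of this pair to $\text{LHS}-\text{RHS}$ collapses to
\[
\bigl((\dim L(\lam_1))^2-(\dim L(\lam_{s-1}))^2\bigr)+\bigl((\dim L(\overline{\lam_s'}))^2-(\dim L(\overline{\lam_2'}))^2\bigr).
\]

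To finish, I would evaluate these four boundary dimensions. Since $\lam_1\in\Lambda_1^B$ and $\overline{\lam_s'}\in\Lambda_1^{\overline{B}}$, the cell modules $W(\lam_1)$ and $W(\overline{\lam_s'})$ are simple, giving $\dim L(\lam_1)=n_{\lam_1}$ and $\dim L(\overline{\lam_s'})=n_{\lam_s'}$. On the other hand, the non-Kleshchev conditions $\lam_s\notin\Lambda_0^B$ and $\overline{\lam_1'}\notin\Lambda_0^{\overline{B}}$, combined with Lemma \ref{3.8}, force $W(\lam_s)\cong L(\lam_{s-1})$ and $W(\overline{\lam_1'})\cong L(\overline{\lam_2'})$, so $\dim L(\lam_{s-1})=n_{\lam_s}$ and $\dim L(\overline{\lam_2'})=n_{\lam_1'}$. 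The elementary identity $n_\mu=n_{\mu'}$ (the transposition of tableaux is a bijection between standard $\mu$-tableaux and standard $\mu'$-tableaux) then makes the four boundary terms cancel in pairs. The main point to watch is the careful bookkeeping of the telescoping step; no input is needed beyond Lemmas \ref{3.8}--\ref{3.10} and the equality $n_\mu=n_{\mu'}$.
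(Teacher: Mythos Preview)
Your proposal is correct and follows essentially the same approach as the paper: pair $B$ with $\overline{B}$ via Lemma~\ref{3.10}, use Lemmas~\ref{3.8}--\ref{3.10} to identify $\dim\rad\lam_i$ and $\dim L_{\lam_i}$ with shifted indices on each side, and close up the boundary terms using $n_\mu=n_{\mu'}$. The only cosmetic difference is that you phrase the middle step as a telescoping computation of $\text{LHS}-\text{RHS}$, whereas the paper matches individual terms directly; the content is the same.
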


\begin{proof}
For an $r$-partition $\lam$ of weight one, we first fix some
notations. Denote $\dim_K\rad\lam$ by $\mathbbm{r}_\lam$ and denote
$\dim_KL_\lam$ by $\mathbbm{l}_\lam$. Let $B$ be a weight one block  in $\mathcal{H}_n(q, Q)$
and let $\lam_1\vartriangleleft\cdots\vartriangleleft\lam_s$ be all
of the $r$-partitions in $B$. Then Lemma \ref{3.8} and Lemma
\ref{3.9} give that $\mathbbm{r}_{\lam_i}=\mathbbm{l}_{\lam_{i-1}}$
for $3\leq i \leq s-1$. Similarly, in block $\overline{B}$,
$\mathbbm{r}_{\overline{\lam_{i}'}}=\mathbbm{l}_{\overline{\lam_{i+1}'}}$ for $2\leq i\leq
s-2$. Moreover, employing Lemma \ref{3.8}, Lemma \ref{3.9} and Lemma \ref{3.10}
yields $\mathbbm{r}_{\lam_2}=n_{\lam_1}$,
$\mathbbm{r}_{\overline{\lam_{s-1}'}}=n_{\overline{\lam_s'}}$,
$n_{\overline{\lam_1'}}=\mathbbm{l}_{\overline{\lam_2'}}$ and
$n_{\lam_s}=\mathbbm{l}_{\lam_{s-1}}$. Note that
$n_{\lam}=n_{\overline{\lam'}}$ and thus
$\mathbbm{r}_{\lam_2}=\mathbbm{l}_{\overline{\lam_2'}}$ and
$\mathbbm{r}_{\overline{\lam_{s-1}'}}=\mathbbm{l}_{\lam_{s-1}}$. Therefore,
$\sum\limits_{i=2}^{s-1}(\mathbbm{r}_{\lam_i}^2+\mathbbm{r}_{\overline{\lam_i'}}^2)
=\sum\limits_{i=2}^{s-1}(\mathbbm{l}_{\lam_i}^2+\mathbbm{l}_{\overline{\lam_i'}}^2)$.
Letting $\lam$ run over all the $r$-partitions of weight one, the
lemma follows.
\end{proof}

Combining Lemma \ref{3.11} with Lemma \ref{3.12}, we get that $\rad
\mathcal {B}=I_\mathcal{B}$, where $I_{\mathcal {B}}$ is the
nilpotent ideal constructed by using the graded cellular basis (HM basis). It
is well known that the radical of an algebra is equal to the direct
sum of the radicals of blocks of the algebra. Then we have proven
the main result of this paper.

\begin{thm}\label{3.13}
Let $B$ be a weight one block in $\mathcal{H}_n(q, Q)$. Then
$\rad B=I_B$.
\end{thm}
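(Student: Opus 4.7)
The plan is to deduce Theorem \ref{3.13} by applying Lemma \ref{3.7}, but not directly to the block $B$. For a weight one block $B$ with $r$-partitions $\lam_1 \vartriangleleft \cdots \vartriangleleft \lam_s$, Lemma \ref{3.9} gives $\Lambda_3^B=\{\lam_s\}$ and $\Lambda_4^B=\{\lam_1\}$, so condition (1) of Lemma \ref{3.7} demands $n_{\lam_s}^2 = n_{\lam_1}^2$, which has no reason to hold in general; a similar asymmetry afflicts condition (2). Recognizing this obstruction and finding the right algebra to which Lemma \ref{3.7} applies is the crux of the argument.

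To symmetrize the picture, I would invoke the Ariki-Koike algebra $\mathcal{H}'_n(q, Q)$ with parameters $q^{-1}, Q_r, \ldots, Q_1$. By Lemma \ref{3.5} conjugation $\lam \mapsto \lam'$ sends weight one $r$-partitions for $\mathcal{H}_n$ to weight one $r$-partitions for $\mathcal{H}'_n$, and by Lemma \ref{3.6} it reverses the dominance order. So I would form the direct sum $\mathcal{B}$ of all weight one blocks in $\mathcal{H}_n(q,Q)$ and $\mathcal{H}'_n(q,Q)$, which inherits the structure of a symmetric cellular algebra by concatenating the HM bases on each summand. The key observation is that every element of $\Lambda_3^{\mathcal{B}}$ coming from a weight one block $B$ of $\mathcal{H}_n$ (namely $\lam_s$) has a partner in $\Lambda_4^{\mathcal{B}}$ coming from the corresponding conjugate block $\overline{B}$ of $\mathcal{H}'_n$ (namely $\overline{\lam_s'}$), with the same tableau count since $n_{\lam}=n_{\lam'}$.

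Next I would verify the two hypotheses of Lemma \ref{3.7} for $\mathcal{B}$. Condition (1) follows by pairing $\lam_1 \leftrightarrow \overline{\lam_1'}$ and $\lam_s \leftrightarrow \overline{\lam_s'}$ across $\Lambda_3^{\mathcal{B}}$ and $\Lambda_4^{\mathcal{B}}$ and using $n_\lam = n_{\lam'}$. For condition (2), I would combine Lemma \ref{3.8} and Lemma \ref{3.9} to identify $\dim_K \rad \lam_i = \dim_K L(\lam_{i-1})$ for $3 \leq i \leq s-1$ inside $B$, and analogously $\dim_K \rad \overline{\lam_i'} = \dim_K L(\overline{\lam_{i+1}'})$ for $2 \leq i \leq s-2$ inside $\overline{B}$. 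The boundary contributions are then matched between $B$ and $\overline{B}$ via identities of the form $\dim_K \rad \lam_2 = n_{\lam_1} = n_{\overline{\lam_1'}} = \dim_K L(\overline{\lam_2'})$, together with the symmetric companion for the top end, so that summing over all weight one $r$-partitions yields the required equality.

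Once Lemma \ref{3.7} delivers $\rad \mathcal{B} = I_{\mathcal{B}}$, the theorem follows at once: the radical of a finite direct sum of algebras is the direct sum of the radicals, and by Lemma \ref{2.1} the ideal $I$ constructed from a cellular basis respects the block decomposition, so restricting the equality $\rad \mathcal{B} = I_{\mathcal{B}}$ to any individual weight one block $B$ of $\mathcal{H}_n(q,Q)$ gives $\rad B = I_B$. The substantive part of the plan is therefore concentrated in the combinatorial bookkeeping of condition (2); the introduction of $\mathcal{H}'_n(q,Q)$ via conjugation is what makes that bookkeeping balance.
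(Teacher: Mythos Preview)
Your proposal is correct and follows essentially the same route as the paper: form the auxiliary algebra $\mathcal{B}$ by adjoining all weight one blocks of the conjugate Ariki--Koike algebra $\mathcal{H}'_n(q,Q)$, verify the two hypotheses of Lemma~\ref{3.7} for $\mathcal{B}$ via the conjugation pairing (exactly the content of Lemmas~\ref{3.11} and~\ref{3.12}), and then restrict the resulting equality $\rad\mathcal{B}=I_{\mathcal{B}}$ blockwise. Your identification of the obstruction to applying Lemma~\ref{3.7} directly to $B$, and of conjugation as the device that restores the needed symmetry, matches the paper's argument precisely.
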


\begin{cor}\label{3.14}
Let $B$ be a weight one block in $\mathcal{H}_n(q, Q)$ and the
$r$-partitions in $B$ are $\lam_1\vartriangleleft\cdots\vartriangleleft\lam_s$.  We have
\begin{enumerate}
\item[(1)]\, $(\rad B)^3=0$.
\item[(2)]\,If $s>2$, then $(\rad B)^2 \neq 0$.
\end{enumerate}
\end{cor}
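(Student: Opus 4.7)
The plan is to reduce both parts to a Loewy length computation for the projective indecomposable $B$-modules, leveraging Theorem \ref{3.13} together with Lemmas \ref{3.8} and \ref{3.9}. The first step is to read off a cell filtration of each projective: by Brauer reciprocity for cellular algebras, $[P(\lam_j):W(\lam_i)]=[W(\lam_i):L(\lam_j)]$, which by Lemma \ref{3.8} vanishes except for $i\in\{j,j+1\}$. Hence for $1\leq j\leq s-1$ one has
$$0 \subset W(\lam_{j+1}) \subset P(\lam_j), \qquad P(\lam_j)/W(\lam_{j+1}) \cong W(\lam_j),$$
(with $W(\lam_s)\cong L(\lam_{s-1})$ when $j=s-1$). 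Reading off composition factors: $[P(\lam_j):L(\lam_j)]=2$, while $L(\lam_{j-1})$ and $L(\lam_{j+1})$ each appear with multiplicity one when they exist.

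For part (1), Lemma \ref{3.9} shows every cell module has Loewy length at most two, so $(\rad B)^{2}W(\lam_i)=0$ for all $i$ and the cell filtration gives $(\rad B)^{2}P(\lam_j)\subseteq W(\lam_{j+1})$. I would then use the symmetric structure of $B$, which makes $B$ self-injective and gives the duality $\operatorname{Ext}^{1}(L,L')\cong\operatorname{Ext}^{1}(L',L)^{*}$: the short exact sequence $0\to L(\lam_{i-1})\to W(\lam_i)\to L(\lam_i)\to 0$ supplies $\operatorname{Ext}^{1}(L(\lam_i),L(\lam_{i-1}))\neq 0$, and hence also $\operatorname{Ext}^{1}(L(\lam_{i-1}),L(\lam_i))\neq 0$. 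Combined with the composition-factor tally and the simplicity of $\operatorname{soc} P(\lam_j)=\operatorname{hd} P(\lam_j)=L(\lam_j)$, this pins the Loewy structure of $P(\lam_j)$ to
$$L(\lam_j)\,\big/\, L(\lam_{j-1})\oplus L(\lam_{j+1})\,\big/\, L(\lam_j)$$
(with boundary cases when $j=1$ or $j=s-1$ dropping the missing simple), so $(\rad B)^{2}P(\lam_j)\subseteq \operatorname{soc} P(\lam_j)$ and thus $(\rad B)^{3}P(\lam_j)=0$. Summing over $j$ yields $(\rad B)^{3}=0$.

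For part (2), when $s>2$ the projective $P(\lam_1)$ has composition factors $L(\lam_1),L(\lam_2),L(\lam_1)$, with head and socle both equal to $L(\lam_1)$ by symmetry. If $(\rad B)^{2}$ were zero then $\rad P(\lam_1)$ would be semisimple and hence equal to $\operatorname{soc} P(\lam_1)=L(\lam_1)$, contradicting the appearance of $L(\lam_2)$ as a composition factor of $\rad P(\lam_1)$. Hence $(\rad B)^{2}\neq 0$. The principal obstacle is the containment $(\rad B)^{2}P(\lam_j)\subseteq \operatorname{soc} P(\lam_j)$ in part (1): the cell filtration alone only puts $(\rad B)^{2}P(\lam_j)$ inside $W(\lam_{j+1})$, which could a priori be the whole submodule, so ruling out Loewy length four is precisely where self-injectivity of $B$ (the symmetric cellular structure, cf.\ Remark \ref{2.3}(1)) is essential.
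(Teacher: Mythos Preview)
Your argument is correct, but it takes a genuinely different route from the paper's proof and, contrary to your opening sentence, does not actually use Theorem~\ref{3.13} anywhere. The paper deduces (1) in one line: Theorem~\ref{3.13} gives $\rad B=I_B$, and Li's general result in \cite{L} says $I^3=0$ for any symmetric cellular algebra. For (2) the paper invokes another criterion from \cite{L2}: if some $\lam$ has $k_\lam=0$ while both $\Phi_\lam\neq 0$ and $\Psi_\lam\neq 0$, then $(\rad A)^2\neq 0$; one then locates such a $\lam$ among $\lam_2,\dots,\lam_{s-1}$ using Lemma~\ref{3.9} and the fact that the dual basis is again cellular (so only one $\lam$ has $\Psi_\lam=0$).

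Your approach instead computes the Loewy structure of each $P(\lam_j)$ directly from Brauer reciprocity, Lemma~\ref{3.8}, and the symmetry of $B$. The key step---showing the heart of $P(\lam_j)$ is semisimple---is handled by your $\operatorname{Ext}^1$ symmetry argument: the cell module $W(\lam_j)$ gives $\operatorname{Ext}^1(L(\lam_j),L(\lam_{j-1}))\neq 0$, and symmetry of $B$ yields $\operatorname{Ext}^1(L(\lam_j),L(\lam_{j+1}))\neq 0$, so both neighbours lie in $\rad P(\lam_j)/\rad^2 P(\lam_j)$, forcing $\rad^2 P(\lam_j)=\operatorname{soc}P(\lam_j)$. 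This is sound. What your approach buys is an explicit description of the Loewy layers of every projective (essentially identifying $B$ as a Brauer line algebra), which is more than the corollary asks and connects directly to the Martin-type radical-length questions mentioned in the introduction. What the paper's approach buys is brevity and thematic consistency: it shows (1) and (2) as immediate dividends of the main theorem $\rad B=I_B$, reinforcing that the ideal $I$ genuinely captures the radical.
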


\begin{proof}
(1) Note that Li proved in \cite{L} that for a symmetric cellular algebra,
$I^3=0$, where $I$ is the nilpotent ideal constructed in \cite{L}.
Then we have from Theorem \ref{3.13} that $(\rad B)^3=0$.

(2) Recall that in \cite{L2}, Li proved the following result:
if there exists $\lam$ with $k_{\lam}=0$ such that
$\Phi_{\lam}\neq 0$ and $\Psi_{\lam}\neq 0$, then
$(\rad A)^2\neq 0$. So we only need to find such $\lam$.
In fact, we have pointed out in the proof of Lemma \ref{3.9} that $k_{\lam_i}=0$ for $i=1,\cdots, s$.
Furthermore, in view of Lemma \ref{3.9}, the number of $\lam$ with $\Phi_\lam\neq 0$ is $s-1$.
Since the dual basis of HM basis of $B$ is also cellular, the number of $\lam$ with $\Psi_\lam= 0$ is 1.
Note that $s>2$. Then there exists $\lam$ with $\Phi_\lam\neq 0$ and $\Psi_\lam\neq 0$.
\end{proof}

\begin{remarks}\label{3.15}
\begin{enumerate}
\item[(1)]\,If $s=2$, then $(\rad B)^2=0$. In fact, in this case
$\Phi_{\lam_2}=0$ and $\Psi_{\lam_1}=0$ by Lemma \ref{3.8}. According to
the definition of $I$, in order to prove $(\rad B)^2=0$, we only need to
check that the product of arbitrary two generators of $I$ is zero.
This can be done by direct computations. The key technology is the formulas proved in
\cite[Lemma 3.1]{L} and \cite[3.1]{LZ}. We omit the details.
\item[(2)]\,If $e=2$, then for a weight one block $B$,
we have from Lemma \ref{3.8} that $s\leq 2$. Note that a block
has weight 0 if and only if it contains exactly one $r$-partition.
This implies that $s=2$ because $B$ is a weight one block. This yields  $(\rad B)^2=0$ by (1).
\end{enumerate}
\end{remarks}

In order to give another corollary, let us recall some definitions.
Let $A$ be a finite dimensional symmetric cellular algebra, whose center
is denoted by $Z(A)$. In \cite{L1}, Li defined an ideal $L(A)$ of $Z(A)$, which
is generated by $\{e_{\lam}\mid\lam\in\Lambda\}$, where
$e_{\lam}=\sum_{S\in M(\lam)}C_{S,T}^{\lam}D_{T,S}^{\lam}$. Another ideal of $Z(A)$,
Reynold ideal $R(A)$, is defined to be the intersection of $Z(A)$ with
the socle of $A$. In general, we only know $H(A)\subseteq L(A)\cap R(A)$,
where $H(A)$ is the so-called Higman ideal. Note that $H(A)$ is an ideal of $Z(A)$ too.

\begin{cor}
$L(B)=R(B)$.
\end{cor}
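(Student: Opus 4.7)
The plan is to prove $L(B)=R(B)$ by separately establishing the two inclusions, using Theorem~\ref{3.13} ($\rad B = I_B$) and Corollary~\ref{3.14}(1) ($(\rad B)^3=0$) as the key inputs.

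\smallskip

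\noindent\textbf{Inclusion $L(B)\subseteq R(B)$.} Since $B$ is symmetric, $R(B)=Z(B)\cap\mathrm{soc}(B)$ and $\mathrm{soc}(B)=\mathrm{ann}_B(\rad B)$. Each generator $e_{\lambda_i}$ of $L(B)$ is already central by construction, so it suffices to check that $e_{\lambda_i}\cdot\rad B = 0$. Using the cellular and dual-cellular product formulas recalled in Section~2 (in particular \cite[Lemma 3.1]{L}), together with the key fact noted in the proof of Lemma~\ref{3.9} that $k_{\lambda_i}=0$ for every $i=1,\ldots,s$, I would argue that each summand $C_{S,T}^{\lambda_i}D_{T,S}^{\lambda_i}$ of $e_{\lambda_i}$ lies in $I_B\cdot I_B = (\rad B)^2$. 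Then Corollary~\ref{3.14}(1) yields $e_{\lambda_i}\cdot\rad B\subseteq(\rad B)^3=0$, placing $e_{\lambda_i}$ in $\mathrm{soc}(B)$, hence in $R(B)$.

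\smallskip

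\noindent\textbf{Inclusion $R(B)\subseteq L(B)$ via dimension count.} For a symmetric $K$-algebra over a splitting field, $\dim_K R(B)$ equals the number of isomorphism classes of simple $B$-modules. By Lemma~\ref{3.9}(1), this number is $|\Lambda_0^B|=s-1$. To match this, I would exhibit $s-1$ linearly independent elements of $L(B)$, the natural candidates being $e_{\lambda_1},\ldots,e_{\lambda_{s-1}}$. The generator $e_{\lambda_s}$ contributes nothing since $\lambda_s\in\Lambda_3^B$ forces $\Phi_{\lambda_s}=0$ by Lemma~\ref{3.9}(4). Linear independence of the remaining $e_{\lambda_i}$'s I would prove by a triangular argument on the cellular filtration $B(\lhd\lambda)\subset B(\unlhd\lambda)$: the congruence $C_{S,T}^\lambda D_{U,V}^\lambda\equiv\Psi(T,U)D_{S,V}^\lambda\pmod{B(\rhd\lambda)}$ gives $e_{\lambda_i}$ a nonzero ``leading component'' concentrated in the $\lambda_i$-layer, while $e_{\lambda_j}$ for $j\ne i$ has no component there. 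This produces $\dim_K L(B)\geq s-1$, and the first inclusion then forces equality.

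\smallskip

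The main obstacle is the linear independence in the dimension-count step. The first inclusion follows fairly formally from the vanishing of every $k_{\lambda_i}$ together with $(\rad B)^3=0$, but the triangular argument requires careful bookkeeping of which cellular basis elements survive modulo the ideal of lower terms, and of the interplay between the cellular filtration indexed by $(\mathscr{P}_{r,n},\unrhd)$ and the dual-cellular filtration indexed by the reversed order. A cleaner alternative, which I would probably pursue as a fallback, is to compute the action of each $e_{\lambda_i}$ on the cell modules $W(\lambda_1),\ldots,W(\lambda_{s-1})$: by the general theory, $e_{\lambda_i}$ acts as multiplication by a scalar involving $\Phi_{\lambda_i}$ on $W(\lambda_i)$ and as zero on strictly higher $W(\lambda_j)$, so the matrix recording these scalars is upper-triangular with nonzero diagonal entries, giving the required linear independence at once.
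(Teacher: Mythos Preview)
The paper's proof is a single sentence: it invokes a general result proved in \cite{L2} stating that for any symmetric cellular algebra $A$, if $\rad A = I$ then $L(A) = R(A)$; since Theorem~\ref{3.13} gives $\rad B = I_B$, the corollary is immediate. Your two-inclusion strategy is, in effect, an attempt to reprove this general implication by hand in the weight-one case.

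The main gap is in your first inclusion. You assert that each summand $C_{S,T}^{\lambda_i}D_{T,S}^{\lambda_i}$ lies in $I_B\cdot I_B$, but this is not justified and is doubtful for $i=1$: since $\lambda_1\in\Lambda_1^B$ the cell module $W(\lambda_1)$ is simple, so the elements $C_{S,T}^{\lambda_1}$ cannot all lie in $\rad B=I_B$, and no alternative factorization is offered. The vanishing of $k_{\lambda_1}$ gives $e_{\lambda_1}^2=k_{\lambda_1}e_{\lambda_1}=0$, hence $e_{\lambda_1}\in\rad B$, but the stronger containment $e_{\lambda_1}\in(\rad B)^2$ does not follow from this alone. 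Your reverse inclusion via a dimension count is more solid in outline, though as you yourself note the triangular argument mixing the $\unrhd$-filtration for the $C$-basis with the reversed filtration for the $D$-basis needs care; and the remark that $e_{\lambda_s}$ ``contributes nothing'' because $\Phi_{\lambda_s}=0$ is not really an argument (fortunately it is also not needed, since you only require $s-1$ independent elements). If you want a self-contained proof rather than the citation, the cleanest fix is to locate the actual argument in \cite{L2} and reproduce it, rather than improvise around the specific structure of $I_B$.
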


\begin{proof}
It follows from Theorem \ref{3.13} and the fact proved by Li
in \cite{L2}: for a symmetric cellular algebra $A$, if $\rad A=I$, then $L(A)=R(A)$.
\end{proof}

\medskip

\noindent{\bf Acknowledgement}
We are sincerely grateful to the
anonymous referee for the careful reading and valuable comments,
especially for pointing out an error in a preliminary version of this paper.
Yanbo Li would like to express his
sincere thanks to Chern Institute of Mathematics of Nankai
University for the hospitality during his visit.


\begin{thebibliography}{}

\bibitem{AK} S. Ariki and K. Koike, {\em A Hecke algebra of
$(\mathbb{Z}/r\mathbb{Z})\wr\mathfrak{S}_n$ and construction of its
irreducible representations},  Adv. Math. {\bf 106} (1994), 216-243.

\bibitem{A} S. Ariki, {\em On the classification of simple modules for cyclotomic
Hecke algebras of type $G(r, 1, n)$ and Kleshchev multi-partitions},
Osaka J. Math. {\bf 38} (2001), 827-837.

\bibitem{B} Brou$\rm\acute{e}$, {\em Reflection groups, braid groups, Hecke algebras, finite
reductive groups}, in: Current Developments in Mathematics, 2000,
International Press, Boston, 2001, pp. 1-103.

\bibitem{BM} M. Brou$\rm\acute{e}$ and G. Malle, {\em Zyklotomische Hecke algebren},
Ast\'{e}risque {\bf 212} (1993), 119-189.

\bibitem{DJM} R. Dipper, G. James and A. Mathas, {\em Cyclotomic $q$-Schur
algebras,} Math. Z. {\bf 229} (1999), 385-416.

\bibitem{F1} M. Fayers, {\em Weights of multipartitions and representations of
Ariki-Koike algebras}, Adv. Math. \textbf{206} (2008), 112-144.

\bibitem{F} M. Fayers, {\em Weights of multipartitions and representations of
Ariki-Koike algebras II: canonical bases}, J. Algebra \textbf{319} (2008), 2963-78.

\bibitem{F2} M. Fayers, {\em Decomposition numbers for weight three blocks of symmetric groups and Iwohori-Hecke algebras},
Trans. Amer. Math. Soc. \textbf{360} (2008), 1341-1376.

\bibitem{GL} J. Graham and G. Lehrer, {\em Cellular algebras},
 Invent. Math. {\bf 123} (1996), 1-34.

\bibitem{HM} J. Hu and A. Mathas, {\em Graded cellular bases for the
cyclotomic Khovanov-Lauda-Rouquier algebras of type $A$}, Adv. Math.
{\bf 225} (2010), 598-642.

\bibitem{L1} Yanbo Li, {\em Centers of symmetric cellular
algebras}, Bull. Aust. Math. Soc. {\bf 82} (2010), 511-522.

\bibitem{L} Y. Li, {\em Radicals of symmetric cellular
algebras}, Colloq. Math. {\bf 133} (2013), 67-83.

\bibitem{L2} Y. Li, {\em On the radical of the group
algebra of a symmetric group}, J. Algebra Appl. {\bf 16} (2017),
1750175 (11 pages).

\bibitem{LX} Y. Li and Z. Xiao, {\em On cell modules of symmetric cellular
algebra}, Monatsh. Math.  {\bf 168} (2012),  49-64.

\bibitem{LZ} Y. Li and D. Zhao, {\em Projective cell modules of Frobenius cellular algebras},
Monatsh. Math.  {\bf 175} (2014), 283-291.

\bibitem{LZD} Y. Li and D. Zhao, {\em On graded symmetric cellular algebras},
J. Aust. Math. Soc. (preprint) doi:10.1017/S1446788719000223.

\bibitem{LR} Z. Lin and H. Rui, {\em Cyclotomic q-Schur algebras and Schur-Weyl
duality}, Comtemp. Math. {\bf 413} (2006), 133-155.

\bibitem{LM} S. Lyle and A. Mathas, {\em Blocks of cyclotomic Hecke
algebras}, Adv. Math. {\bf 216} (2007), 854-878.

\bibitem{M} A. Mathas, {\em Iwahori-Hecke algebras and Schur algebras of the
symmetric group}, University Lecture Series 15, Amer. Math. Soc. (1999)

\bibitem{Ma} S. Martin, {\em Projective indecomposable modules for symmetric groups}, I. Q. J. Math. Oxford {\bf 44} (1993), 87-99.

\bibitem{Mr} G. Murphy, {\em The representations of Hecke algebras of type $A_n$}, J.
Algebra {\bf 173} (1995), 97-121.

\bibitem{MM} G. Malle and A. Mathas, {\em Symmetric cyclotomic Hecke algebras},  J. Algebra {\bf 205} (1998),
275-293.

\bibitem{R} M. J. Richards, {\em Some decomposition numbers for Hecke algebras of general linear groups},
Math. Proc. Cambridge Philos. Soc. {\bf 119} (1996), 383-402.

\bibitem{S} J. C. Scopes, {\em Cartan matrices and Morita equivalence for blocks of the symmetric groups}, J.
Algebra {\bf 142} (1991), 441-455.

\bibitem{T} K. Tan, {\em Martin¡¯s conjecture holds for weight 3 blocks
of symmetric groups}, J. Algebra {\bf 320} (2008), 1115-1132.

\end{thebibliography}
\end{document}